\newtheorem{thm}{Theorem}
\newtheorem{lemma}{Lemma}
\newdefinition{defi}{Definition}
\newdefinition{remark}{Remark}
\begin{document}

\begin{frontmatter}

  \title{Comparison of probabilistic and deterministic point sets}

\author{Peter Grabner\fnref{thanks}\corref{cor}}
\ead{peter.grabner@tugraz.at}
\author{Tetiana Stepanyuk\fnref{thanks}}
\ead{t.stepaniuk@tugraz.at}
\address{Graz University of Technology, Institute of Analysis and Number
  Theory, Kopernikusgasse 24/II 8010, Graz, Austria}
\fntext[thanks]{The authors are supported by the Austrian Science Fund FWF
  project F5503 (part of the Special Research Program (SFB) 
``Quasi-Monte Carlo Methods: Theory and Applications'')}

\cortext[cor]{Corresponding author}

\begin{abstract}
  In this paper we make a comparison between certain probabilistic and deterministic point sets and show that some deterministic constructions (spherical $t$-designs) are better or as good as probabilistic ones. 
  
  We find asymptotic equalities for the  discrete Riesz $s$-energy of sequences of well separated $t$-designs on the unit sphere $\mathbb{S}^{d}\subset\mathbb{R}^{d+1}$, $d\geq2$.
  The case $d=2$ was studied in \cite{HesseLeopardiTheCoulombEnergy, Hesse:2009s-energy}. In \cite{Bondarenko-Radchenko-Viazovska2013:optimal_designs} it was established, that for $d\geq 2$, there exists a constant $c_{d}$, such that for every $N> c_{d}t^{d}$ there exists a well-separated spherical $t$-design on $\mathbb{S}^{d}$ with $N$ points. For this reason, in our paper we assume, that the sequence of well separated spherical $t$-designs is such that $t$ and $N$ are related by $N\asymp t^{d}$.

%   Coefficients
%   in the Fourier-Laplace series of these functions (with respect to the
%   spherical harmonics)
%   $\sum\limits_{\ell=0}^{\infty}\sum\limits_{k=1}^{Z(d,\ell)}\hat{f}_{\ell,k}Y_{\ell,k}^{(d)}(\mathbf{x})$
%   are such that the product $\ell^{d}\hat{f}_{\ell,k}$ decreases approximately
%   like logarithmic function. The class of such functions is denoted by
%   $\mathbb{H}^{(\frac{d}{2},\gamma)}(\mathbb{S}^{d})$, $\gamma>\frac{1}{2}$.

%   We show that the worst-case cubature error
%   $\mathrm{wce}(Q[X_{N}];\mathbb{H}^{(\frac{d}{2},\gamma)}(\mathbb{S}^{d}))$,
%   where $X_{N}$ is a well-separated $t$-design ($t\asymp N^{\frac{1}{d}}$) has
%   the order $N^{-\frac{1}{2}}(\ln N)^{-\gamma+\frac{1}{2}}$. For arbitrary
%   $N$-point cubature rule $Q[X_{N},\omega]$ we prove, that 
%   $\mathrm{wce}(Q[X_{N},\omega];\mathbb{H}^{(\frac{d}{2},\gamma)}(\mathbb{S}^{d}))\geq
%   C_{d,\gamma}N^{-\frac{1}{2}}(\ln N)^{-\gamma}$.

%   Also we consider Quasi Monte Carlo (QMC) rules for functions in classes
%   $\mathbb{H}^{(\frac{d}{2},\gamma)}(\mathbb{S}^{d})$. If $X_{N}$ is a sequence
%   of QMC designs for Sobolev spaces $\mathbb{H}^{s}(\mathbb{S}^{d})$, we prove
%   that it is also a sequence of QMC designs for
%   $\mathbb{H}^{(\frac{d}{2},\gamma)}(\mathbb{S}^{d})$ for all
%   $\gamma>\frac{1}{2}$.
%
\end{abstract}

\begin{keyword}
The $s$-energy, discrete energy, energy integral, $t$-design, well-separated point sets, equal-weight numerical integration, equal-area partition, sphere. 
\MSC[2010]{41A55, 33C45, 41A63}
\end{keyword}

\end{frontmatter}

\section{Introduction}\label{intro}

Let $\mathbb{S}^{d}=\{\mathbf{x}\in\mathbb{R}^{d+1}: \ |\mathbf{x}|=1\}$, where $d\geq2$, be the unit
sphere in the Euclidean space $\mathbb{R}^{d+1}$, equipped with the Lebesgue measure 
$\sigma_{d}$ normalized by $\sigma_{d}(\mathbb{S}^{d})=1$.

 Let $K_{d}$ be the positive definite function (see \cite{Schoenberg:1942PositiveDefinite})
\begin{align}\label{kernel}
K_{d}(t):=\sum\limits_{n=0}^{\infty}a_{n}P_{n}^{(d)}(t), \ \ a_{n}\geq 0,
\end{align}
 where $P_{n}^{(d)}$ is the $n$-th generalized Legendre polynomial,
 normalized by ${P_{n}^{(d)}(1)=1}$ and orthogonal on the interval $[-1,1]$
 with respect to the weight function $(1-t^{2})^{d/2-1}$.
In this paper
 we investigate energy integrals with respect to a probabilistic model ("jittered sampling")  of the form
\begin{equation}\label{nonsingularInt}
\frac{1}{N^{2}}\int\limits_{A_{1}}...\int\limits_{A_{N}}\sum\limits_{i,j=1}^{N}K_{d}(\langle\textbf{x}_{i},\mathbf{x}_{j}\rangle)d\sigma_{1}^{*}(\mathbf{x}_{1})...d\sigma_{N}^{*}(\mathbf{x}_{N}), \ \ K_{d}\in \mathbb{C}_{[-1,1]}
\end{equation}
and
\begin{equation}\label{singularInt}
\frac{1}{N^{2}}\int\limits_{A_{1}}...\int\limits_{A_{N}}
\sum\limits_{i,j=1, \atop i\neq j}^{N}K_{d}(\langle\textbf{x}_{i},\mathbf{x}_{j}\rangle)d\sigma_{1}^{*}(\mathbf{x}_{1})...d\sigma_{N}^{*}(\mathbf{x}_{N}), \ \ K_{d}\in \mathbb{C}_{[-1,1)}.
\end{equation}

Here $\{A_{i}\}_{i=1}^{N}$ is an area regular partition of the sphere (see, e.g., \cite{RakhmanovSaffZhou:1994Minimal}), i.e.:
$\mathbb{S}^{d}=\bigcup\limits_{i=1}^{N}A_{i}$, $A_{i}\cap A_{j}=\emptyset, \ i\neq j$, $\sigma(A_{i})=\frac{1}{N}$
 and the point $ \mathbf{x}_{i}$ is chosen uniformly randomly in $A_{i}$ for $i=1,...,N$.

 We denote
\begin{align}\label{energy}
E(K_{d}, X_{N}):=\frac{1}{N^{2}}\sum\limits_{i,j=1}^{N}K_{d}(\langle\mathbf{x}_{i},\mathbf{x}_{j}\rangle).
\end{align}
 
\begin{defi}\label{def1}
  A spherical $t$-design is a finite subset $X_{N}\subset \mathbb{S}^{d}$ with
  a characterizing property that an equal weight integration rule with nodes
  from $X_{N}$ integrates all spherical polynomials $p$ of total degree at most  $t$ exactly;
  that is,
\begin{equation*}
  \frac{1}{N}\sum\limits_{\mathbf{x}\in X_{N}}p(\mathbf{x})=
  \int_{\mathbb{S}^{d}}p(\mathbf{x})d\sigma_{d}(\mathbf{x}), \quad
  \mathrm{deg}(p)\leq t.
\end{equation*}

Here $N$ is the cardinality of $X_{N}$ or the number of points of spherical design.
\end{defi}

\begin{defi}
  A sequence of $N$-point sets $(X_{N})_{N}$,
  $X_{N}=\big\{\mathbf{x}_{1},\ldots, \mathbf{x}_{N} \big\}$, is called
  well-separated if there exists a positive constant $c_{1}$ such that
\begin{equation}\label{wellSeparat}
  \min\limits_{i\neq j}|\mathbf{x}_{i}- \mathbf{x}_{j}|>
  \frac{c_{1}}{N^{\frac{1}{d}}}.
\end{equation}
\end{defi}

The concept of spherical $t$-design was introduced  by Delsarte, Goethals and
Seidel in the groundbreaking paper \cite{Delsarte-Goethals-Seidel1977:spherical_designs}, where they also proved the lower bound $N\geq C_{d}t^{d}$.
The relation between $N$ and $t$ in spherical designs plays important role. Korevaar and Meyers \cite{KorevaarMeyers:SphericalFaraday1993} conjectured that there always exist spherical $t$-design with $N\asymp t^{d}$ points.

 We write
 $a_{n}\asymp b_{n}$ to mean that there exist positive constants $C_{1}$ and
 $C_{2}$ independent of $n$ such that $C_{1}a_{n}\leq b_{n}\leq C_{2}a_{n}$ for
 all $n$.
 
 Also many authors  have predicted the existence of well-separated spherical $t$-designs in 
 $\mathbb{S}^{d}$ of asymptotically minimal cardinality $\mathcal{O}(t^{d})$ as $t\rightarrow\infty$ (see, e.g., \cite{AnChenSloanWomersley_Well2010}, \cite{HesseLeopardiTheCoulombEnergy}).

In 2013  Bondarenko, Radchenko and Viazovska
\cite{Bondarenko-Radchenko-Viazovska2013:optimal_designs} proved, that indeed for ${d\geq 2}$, there exists a constant $c_{d}$, which depends only of $d$, such that for every $N\geq c_{d}t^{d}$ there exists a spherical $t$-design on $\mathbb{S}^{d}$ with $N$ points. Two years later in 
\cite{Bondarenko-Radchenko-Viazovska2015:Well_separated} they showed, that for each ${d\geq 2}$, $t\in \mathbb{N}$, $N>c_{d}t^{d}$, there exist positive constants  $c_{d}$ and $\lambda_{d}$, depending only on $d$, such that for every $N\geq c_{d}t^{d}$, there exists a spherical $t$-design on $\mathbb{S}^{d}$ ,
consisting of $N$ points $\{\mathbf{x}_{i}\}_{i=1}^{N}$ with $|\mathbf{x}_{i}-\mathbf{x}_{j}|\geq \lambda_{d}N^{-\frac{1}{d}}$ for $i\neq j$, where $c_{d}$ and $\lambda_{d}$ are positive constants, depending only on $d$. 

  Taking this into account we always assume that
 \begin{equation}\label{NandT}
 N=N(t)\asymp t^{d}.
 \end{equation}

For given $s>0$ the discrete Riesz $s$-energy of a set of $N$ points  $X_{N}$ on 
$\mathbb{S}^{d}$ is defined as
\begin{equation}\label{RieszbDef}
  E_{d}^{(s)}(X_{N}):=\frac{1}{2}
  {\mathop{\sum}\limits_{i,j=1,\atop i\neq j}^{N}}|\mathbf{x}_{i}-\mathbf{x}_{j}|^{-s},
\end{equation}
where $|\mathbf{x}|$ denotes the Euclidian norm in $\mathbb{R}^{d+1}$ of the vector $\mathbf{x}$. In the case $s=d-1$ the energy (\ref{RieszbDef}) is called as Coulomb energy.

In this paper we investigate and compare the asymptotic behaviour of the $s$-energy, for $0<s<d$ for sequences of well-separated $t$-designs and also for jittered sampling. 

Hesse and Leopardi
\cite{HesseLeopardiTheCoulombEnergy} showed,  that if spherical $t$-designs with 
${N=\mathcal{O}(t^{2})}$ exist, then they have asymptotically minimal Coulomb energy $E_{2}(X_{N})$. Namely, it was  proved, that the Coulomb energy of each $N$-point spherical $t$-design $X_{N}$ with the following properties: there exist positive constants $\mu$ and separation constant $\lambda$, such that $N\leq\mu(t+1)^{2}$, and the minimum spherical distance between point of $X_{N}$ is bounded from below by $\frac{\lambda}{\sqrt{N}}$, is bounded from above by
\begin{equation}\label{HesseandLeopardi}
  E_{2}^{(1)}(X_{N})\leq \frac{1}{2}N^{2}+C_{\lambda,\mu}N^{\frac{3}{2}}.
\end{equation}
Here and further by $C_{a}$ ($C_{a,b}$) we will denote constants, which may depend only on $a$ ($a$ and $b$), but not on $N$.

In \cite{Hesse:2009s-energy} the result (\ref{HesseandLeopardi}) was extended for all $0<s<2$. In particular, under the assumption that $N\leq \kappa t^{2}$, it was shown that for $0<s<2$, there exists a positive constant $c_{s}$ such that for every  well separated sequence $N$
-point spherical $t$-designs the following estimate holds
\begin{equation}\label{Hesse}
  E_{2}^{(s)}(X_{N})\leq \frac{2^{-s}}{2-s}N^{2}+C_{s,\kappa}N^{1+\frac{s}{2}}.
\end{equation}

Also it should be noticed, that in \cite{BoyvalenkovDragnevHardinSaffStoyanova} some general upper and lower bounds for the energy of spherical designs were found.

The separation constraint was important for the  results (\ref{HesseandLeopardi}) and (\ref{Hesse}). Since the $s$-energy is unbounded as two points approach each other, and since spherical designs can have points arbitrarily close together, the separation constraint is needed to guarantee any asymptotic bounds on the energy.

Denote by $\mathcal{E}_{d}^{(s)}(N)$ the minimal discrete $s$-energy for $N$-points on the sphere
\begin{equation}\label{minCoulomb}
 \mathcal{E}_{d}^{(s)}(N):=\inf\limits_{X_{N}}E_{d}^{(s)}(X_{N}),
\end{equation}
where the infimum is taken over all $N$-points subsets of $\mathbb{S}^{d}$.

Kuijlaars and Saff \cite{KuijlaarsSaff:1998Asymptotics} proved that for $d\geq2$ and $0<s<d$, there exists a constant $C_{d,s}>0$, such that
\begin{equation}\label{KuijlaarsSaff}
 \mathcal{E}_{d}^{(s)}(N)\leq \frac{1}{2} V_{d}(s)N^{2}-C_{d,s}N^{1+\frac{s}{d}},
\end{equation}
where $V_{d}(s)$ is the energy integral
\begin{align}\label{main_term}
  V_{d}(s):=\int\limits_{\mathbb{S}^{d}}\int\limits_{\mathbb{S}^{d}}\frac{1}{|\mathbf{x}-\mathbf{y}|^{s}}d\sigma_{d}(\mathbf{x})d\sigma_{d}(\mathbf{y})=
  \frac{\Gamma(\frac{d+1}{2})\Gamma(d-s)}{\Gamma(d-s+1)\Gamma(d-\frac{s}{2})}.
\end{align}

Earlier, Wagner \cite{Wagner:1992UpperBounds} had obtained the lower bounds
\begin{equation}\label{Wagner1}
 \mathcal{E}_{d}^{(s)}(N)\geq \frac{1}{2} V_{d}(s)N^{2}-C_{d,s}N^{1+\frac{s}{d}}, \ \ d-2<s<d,
\end{equation}
\begin{equation}\label{Wagner2}
 \mathcal{E}_{d}^{(s)}(N)\geq \frac{1}{2} V_{d}(s)N^{2}-C_{d,s}N^{1+\frac{s}{2+s}}, \ \ d\geq3, \ 0<s\leq d-2.
\end{equation}
The combination of (\ref{KuijlaarsSaff}) and (\ref{Wagner1}) leads to the correct order of
 $\mathcal{E}_{d}^{(s)}(N)-\frac{1}{2} V_{d}(s)N^{2}$ for $d-2<s<d$.
 
 We show that for every well-separated sequence of
 $N$-point spherical $t$-designs on 
   $\mathbb{S}^{d}$, $d\geq2$,   with $N\asymp t^{d}$ the following asymptotic equality holds
 \begin{align*}
 E_{d}^{(s)}(X_{N})= \frac{1}{2}  \frac{\Gamma(\frac{d+1}{2})\Gamma(d-s)}{\Gamma(d-s+1)\Gamma(d-\frac{s}{2})}N^{2}+\mathcal{O}\Big(N^{1+\frac{s}{d}}\Big).
\end{align*}

The structure of the paper is as follows.

 Section~\ref{mainResults}
 contains the statements of all theorems.
Here we analyze energy integrals (\ref{nonsingularInt}) and  (\ref{singularInt}) with regard to area-regular partitions of the sphere. In particular the cases, when $K_{d}$ is  the reproducing kernel of a reproducing kernel Hilbert space of continuous functions on the sphere or the Riesz s-energy, are considered.  Then we  make a comparison   with the estimates of respective discrete energy sums for  spherical $t$-designs and minimizing point sets for $s$-energy.

In Section \ref{prelim} we summarize necessary background information for orthogonal polynomials.

In Section~\ref{proofTheor} we give the proofs of the theorems from the 
Section~\ref{mainResults}.

 Section~\ref{proofLem} contains the proofs of some technical lemmas, which are needed to proof Theorem 1.

\section{Formulation of main results}
\label{mainResults}

\subsection{The $s$-energy of spherical designs on $\mathbb{S}^{d}$}
\label{sEnergy}

 By a spherical cap $S(\mathbf{x}; \varphi)$ of centre $\mathbf{x}$ and angular radius
 $\varphi$ we mean
 \begin{equation*}
 S(\mathbf{x}; \varphi):=\big\{\mathbf{y}\in \mathbb{S}^{d} \big| \langle\mathbf{x},\mathbf{y}\rangle\geq \cos\varphi \big\}.
 \end{equation*}
The normalized surface area of a spherical cap is given by 
\begin{equation}\label{capArea}
|S(\mathbf{x}; \varphi)|=\frac{\Gamma((d+1)/2)}{\sqrt{\pi}\Gamma(d/2)}
\int\limits_{\cos\varphi}^{1}(1-t^{2})^{\frac{d}{2}-1}dt
\asymp(1-\cos\varphi)^{\frac{d}{2}} \quad\text{as } \varphi\rightarrow 0.
\end{equation}

If condition (\ref{wellSeparat}) holds for a sequence $(X_{N})_{N}$, then any  spherical cap $S(\mathbf{x}; \alpha_{N})$, $\mathbf{x}\in \mathbb{S}^{d}$, where
\begin{equation}\label{alphaN}
\alpha_{N}:=\arccos\Big(1-\frac{c^{2}_{1}}{8N^{\frac{2}{d}}}\Big),
\end{equation}
 contains at
 most one point of the set $(X_{N})_{N}$. 

From the elementary estimates
\begin{equation}\label{sinIneq}
\sin\theta\leq \theta\leq \frac{\pi}{2}\sin\theta, \quad 0\leq\theta\leq \frac{\pi}{2},
\end{equation}
we obtain
\begin{equation}\label{alphaEstim}
\Big(1-\frac{c^{2}_{1}}{16N^{\frac{2}{d}}}\Big)^{\frac{1}{2}}\frac{c_{1}}{2N^{\frac{1}{d}}}\leq\alpha_{N}\leq 
\frac{\pi}{4}\Big(1-\frac{c^{2}_{1}}{16N^{\frac{2}{d}}}\Big)^{\frac{1}{2}}\frac{c_{1}}{N^{\frac{1}{d}}}.
 \end{equation} 

\begin{thm}\label{theoremRisz} Let $d\geq2$ be fixed, and
  $(X_{N(t)})_t$ be a sequence of well-separated spherical $t$-designs on
   $\mathbb{S}^{d}$, 
  $t$ and $N(t)$ satisfying relation (\ref{NandT}).
Then for the $s$-energy $E_{d}^{(s)}(X_{N})$ the following asymptotic equality holds
 \begin{equation}\label{theorem1}
 E_{d}^{(s)}(X_{N})= \frac{1}{2}  \frac{\Gamma(\frac{d+1}{2})\Gamma(d-s)}{\Gamma(d-s+1)\Gamma(d-\frac{s}{2})}N^{2}+\mathcal{O}\Big(N^{1+\frac{s}{d}}\Big).
\end{equation}
\end{thm}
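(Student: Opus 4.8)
The plan is to exploit the exactness of the $t$-design quadrature on the low-frequency part of the Riesz kernel and to push everything else into the error term. Writing $u_{ij}=\langle\mathbf{x}_i,\mathbf{x}_j\rangle$ and using $|\mathbf{x}_i-\mathbf{x}_j|^2=2(1-u_{ij})$, I would first record that $2E_d^{(s)}(X_N)=\sum_{i\neq j}G(u_{ij})$ with $G(u)=2^{-s/2}(1-u)^{-s/2}$, and that the separation hypothesis together with (\ref{alphaN})--(\ref{alphaEstim}) confines every off-diagonal inner product to the interval $[-1,1-\delta_N]$ with $\delta_N\asymp N^{-2/d}\asymp t^{-2}$. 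The singularity of $G$ at $u=1$ is therefore never actually evaluated on the point set, which is precisely the role played by well-separation.

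Next I would use the positive-definite Gegenbauer expansion $G(u)=\sum_{n=0}^{\infty}a_nP_n^{(d)}(u)$, valid for $0<s<d$, with $a_n>0$, $a_0=V_d(s)$, and the classical asymptotics $a_n\asymp n^{s-d}$. Splitting $G=G_{\le t}+G_{>t}$ at degree $t$ and invoking the design property through the addition theorem (so that $\sum_{i,j}P_n^{(d)}(u_{ij})=0$ for $1\le n\le t$, while the $n=0$ term contributes $N^2$), the degree-$\le t$ part is integrated \emph{exactly}, giving
\begin{equation*}
2E_{d}^{(s)}(X_{N})=\sum_{i\neq j}G(u_{ij})=V_{d}(s)\,N^{2}-N\sum_{n=0}^{t}a_{n}+\sum_{i\neq j}G_{>t}(u_{ij}).
\end{equation*}
This isolates the main term $\tfrac12 V_d(s)N^2$ and two corrections. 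The first is routine: since $a_n\asymp n^{s-d}$ one has $\sum_{n\le t}a_n=\mathcal{O}\big(t^{\max(s-d+1,\,0)}\big)$ (with a logarithm at $s=d-1$), whence $N\sum_{n\le t}a_n=\mathcal{O}\big(N^{(s+1)/d}\big)$, which is of lower order than $N^{1+s/d}$ because $d\ge2$.

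The main obstacle is the tail sum $\sum_{i\neq j}G_{>t}(u_{ij})$, which must be shown to be $\mathcal{O}(N^{1+s/d})$. Here I would either (a) bound $|G_{>t}(u_{ij})|$ pointwise for $u_{ij}=\cos\theta_{ij}\in[-1,1-\delta_N]$ using estimates for the tail of the Gegenbauer expansion of the algebraically singular function $G$ --- heuristically $|G_{>t}(\cos\theta)|$ decays like a power of $(t\theta)^{-1}$ --- and then sum over pairs grouped by angular scale, the separation (\ref{wellSeparat}) controlling the number of pairs with $\theta_{ij}$ in each dyadic range; or, equivalently, (b) construct one-sided polynomial approximants $w_t\le G\le v_t$ of degree $\le t$ on $[-1,1-\delta_N]$ and sandwich $\sum_{i\neq j}G(u_{ij})$ between $\sum_{i\neq j}v_t(u_{ij})$ and $\sum_{i\neq j}w_t(u_{ij})$, each of which the design evaluates exactly as in the display above. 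In the latter route the crux becomes a simultaneous two-sided estimate: the weighted $L^1$-error $\int(v_t-G)\,(1-u^2)^{d/2-1}\,du$ must be $\mathcal{O}(N^{s/d-1})$, while the endpoint values $v_t(1),w_t(1)$ must stay $\mathcal{O}(N^{s/d})$ so that $Nv_t(1)=\mathcal{O}(N^{1+s/d})$; these two demands are in tension precisely because $G(1-\delta_N)\asymp N^{s/d}$. I expect this polynomial-approximation estimate to be the content of the deferred technical lemmas and to be the step requiring genuine care; granting it, the upper and lower bounds match and yield (\ref{theorem1}).
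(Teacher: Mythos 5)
Your decomposition is genuinely different from the paper's, and the part you carry out is correct --- in one respect it is cleaner than what the paper does. The paper does \emph{not} expand $(1-u)^{-s/2}$ in the natural basis $P_n^{(d)}$; it substitutes $\lambda=\frac{d}{2}+K+\frac{1}{2}$ with a large auxiliary integer $K$ into (\ref{expansionGegenbauer1}), i.e.\ it works with Jacobi polynomials $\mathcal{P}_n^{(\frac{d}{2}+K,\frac{d}{2}+K)}$, which are \emph{not} orthogonal with respect to the sphere's weight. Consequently its truncation $h_t$ is handled by the design only through $\sum_{i,j}h_t(\langle\mathbf{x}_i,\mathbf{x}_j\rangle)=N^2\int h_t\,d\sigma_d$, and relating $\int h_t\,d\sigma_d$ back to $V_d(s)$ costs the paper all of Lemma~\ref{lem2} (the cap decomposition and the estimates of $W_t$). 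In your orthogonal expansion that step is free: the addition theorem and the design property annihilate the modes $1\le n\le t$, giving $a_0N^2$ exactly, and your bound $N\sum_{n\le t}a_n=\mathcal{O}(N^{(s+1)/d})$ is correct and of lower order. What you save on the main term, however, you must repay in the tail, and that is where your sketch has a genuine gap.

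The deferred estimate $\sum_{i\neq j}G_{>t}(u_{ij})=\mathcal{O}(N^{1+s/d})$ is the entire technical content of the theorem, and your route (a), read literally (termwise absolute bounds on the tail, then dyadic counting via separation), fails for $d=2$. With $a_n\asymp n^{s-d}$ and $|P_n^{(d)}(\cos\theta)|\ll n^{-(d-1)/2}(\sin\theta)^{-(d-1)/2}$, the absolute tail series does not even converge when $d=2$ and $s\ge\frac{3}{2}$; and when it does converge it gives $|G_{>t}(\cos\theta)|\ll t^{s-(3d-3)/2}(\sin\theta)^{-(d-1)/2}$, which after summing $\ll 2^{kd}$ points per angular scale $2^k\alpha_N$ yields $\mathcal{O}\big(Nt^{s+(3-d)/2}\big)$ --- sufficient for $d\ge3$ but short of the target by a factor $t^{1/2}$ at $d=2$. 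To close the argument you must exploit the oscillation of $P_n^{(d)}(\cos\theta)$ (summation by parts against the Szeg\H{o} asymptotics, the rigorous form of your $(t\theta)^{-1}$ heuristic), which upgrades the bound to roughly $t^{s-(3d-1)/2}\theta^{-(d+1)/2}$ and then gives $\mathcal{O}\big(Nt^{s-(d-1)/2}\big)$ after the same dyadic count, together with a separate (easy) treatment of the at most one near-antipodal point per $\mathbf{x}_j$ permitted by (\ref{wellSeparat}). The paper's choice of large $K$ is precisely the device that avoids this cancellation argument: its coefficients decay like $n^{s-\frac{d}{2}-K-1}$, so the tail $r_t$ is absolutely summable with the strong pointwise bound (\ref{r_ineq1}), and the pair sums reduce to the ready-made counting estimate (\ref{BrauchartHesse}) --- at the price of Lemma~\ref{lem2}. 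So your architecture is viable and arguably more transparent, but as sketched it would not close in the case $d=2$, $\frac{3}{2}\le s<2$, and the refined tail lemma you postpone is exactly where the real work lies.
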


\subsection{Estimates for energy integrals in the nonsingular case}
\label{nonsingular}

 We  consider area-regular partitions for which all regions $A_{i}$ have small diameters:
 $\mathrm{diam}(A_{i})\leq C N^{-\frac{1}{d}}$ for $i=1,...,N$. Here $C$ is a constant that does not depend on $N$ (see, e.g., \cite{GiganteLeopardi2017:Diameter}).

Let $\sigma_{j}^{*}$ be the restriction of the measure $N\sigma$ to $A_{i}$: $\sigma_{i}^{*}(\cdot)=\sigma(A_{i}\cap \cdot)N$. Then each $\sigma_{j}^{*}$ is a probability measure.

\begin{thm}\label{theorem_det}
Let $K_{d}$ be  a continuous function on $[-1,1]$, which is given by (\ref{kernel}).
Then there exists a positive constant $C_{d}$, such that, for the energy $E(K_{d}, X_{N})$ of the form (\ref{energy}) the following estimate holds
\begin{multline}\label{theoremDet}
\int\limits_{A_{1}}...\int\limits_{A_{N}}E(K_{d}, X_{N})d\sigma_{1}^{*}(\mathbf{x}_{1})...d\sigma_{N}^{*}(\mathbf{x}_{N})-a_{0}\\
\leq \frac{C_{d}}{N}\left(N^{-\frac{2}{d}}\sum\limits_{n=1}^{[N^{\frac{1}{d}}]}a_{n}n^{2}+\sum\limits_{n=[N^{\frac{1}{d}}]+1}^{\infty}a_{n} \right).
\end{multline}
\end{thm}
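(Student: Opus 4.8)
The plan is to expand the multiple integral by means of the orthogonal expansion \eqref{kernel} together with the addition theorem for spherical harmonics, and then to exploit the positivity $a_n\ge 0$ to pass from an \emph{exact} identity to the desired one-sided estimate.

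First I would separate the diagonal from the off-diagonal contribution in \eqref{energy}. On the diagonal $\langle\mathbf{x}_i,\mathbf{x}_i\rangle=1$, so, since $P_n^{(d)}(1)=1$, each diagonal term equals $K_d(1)=\sum_{n\ge0}a_n$ and contributes exactly $\frac1N\sum_{n\ge0}a_n$ after integration (the $\sigma_i^*$ are probability measures). For the off-diagonal terms the integrand $K_d(\langle\mathbf{x}_i,\mathbf{x}_j\rangle)$ depends only on the two \emph{independent} variables $\mathbf{x}_i,\mathbf{x}_j$, so each integral factorizes. Writing $P_n^{(d)}(\langle\mathbf{x},\mathbf{y}\rangle)=\frac{1}{Z(d,n)}\sum_{k}Y_{n,k}(\mathbf{x})\overline{Y_{n,k}(\mathbf{y})}$ for an orthonormal basis $\{Y_{n,k}\}_{k=1}^{Z(d,n)}$ of degree-$n$ spherical harmonics, and setting $b_{n,k}^{(i)}:=N\int_{A_i}Y_{n,k}\,d\sigma$, I would express each off-diagonal integral of $P_n^{(d)}$ as $\frac{1}{Z(d,n)}\sum_k b_{n,k}^{(i)}\overline{b_{n,k}^{(j)}}$.

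The key algebraic step is that for $n\ge1$ the harmonics are orthogonal to the constants, so $\sum_{i=1}^N b_{n,k}^{(i)}=N\int_{\mathbb{S}^d}Y_{n,k}\,d\sigma=0$; hence $\sum_{i\ne j}b_{n,k}^{(i)}\overline{b_{n,k}^{(j)}}=-\sum_i|b_{n,k}^{(i)}|^2$. Collecting the $n=0$ term (which produces the main contribution $a_0$) and using $K_d(1)=\sum_n a_n$ to cancel the $a_0/N$ pieces, I expect the exact identity
\[
\int_{A_1}\!\!\cdots\!\!\int_{A_N}E(K_d,X_N)\,d\sigma_1^*\cdots d\sigma_N^*-a_0=\frac1N\sum_{n\ge1}a_n\Big(1-\frac{Q_n}{N}\Big),
\]
where $\frac{Q_n}{N}=N\sum_{i=1}^N\int_{A_i}\int_{A_i}P_n^{(d)}(\langle\mathbf{x},\mathbf{y}\rangle)\,d\sigma(\mathbf{x})\,d\sigma(\mathbf{y})$, the double integral over $A_i\times A_i$ being the addition-theorem reassembly of $\sum_k|b_{n,k}^{(i)}|^2$. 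Since $\sigma(A_i)=1/N$ gives $N\sum_i\sigma(A_i)^2=1$, I can rewrite $1-\frac{Q_n}{N}=N\sum_i\int_{A_i}\int_{A_i}\bigl(1-P_n^{(d)}(\langle\mathbf{x},\mathbf{y}\rangle)\bigr)\,d\sigma\,d\sigma$, and because $a_n\ge0$ it suffices to bound each coefficient $1-Q_n/N$ from above. The estimate then splits at $n=[N^{1/d}]$: for $\mathbf{x},\mathbf{y}\in A_i$ the diameter hypothesis $\mathrm{diam}(A_i)\le CN^{-1/d}$ forces $1-\langle\mathbf{x},\mathbf{y}\rangle=\tfrac12|\mathbf{x}-\mathbf{y}|^2\le\tfrac{C^2}{2}N^{-2/d}$, so for $1\le n\le[N^{1/d}]$ the derivative bound $0\le 1-P_n^{(d)}(t)\le(P_n^{(d)})'(1)(1-t)$ gives $1-Q_n/N\le C_d n^2N^{-2/d}$, while for $n>[N^{1/d}]$ the trivial inequality $Q_n\ge0$ gives $1-Q_n/N\le1$. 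Substituting these two bounds yields exactly \eqref{theoremDet}.

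The main obstacle is the polynomial estimate $1-P_n^{(d)}(t)\le C_d n^2(1-t)$ uniformly for $t=\langle\mathbf{x},\mathbf{y}\rangle$ in the relevant cap range: this requires that $|(P_n^{(d)})'|$ attain its maximum on $[-1,1]$ at the endpoint, with $(P_n^{(d)})'(1)\asymp n^2$ (the constant depending only on $d$), together with the verification that for $n\le N^{1/d}$ the value $\langle\mathbf{x},\mathbf{y}\rangle$ stays in the monotone region above the largest zero of $P_n^{(d)}$ — precisely the kind of orthogonal-polynomial input to be collected in Section~\ref{prelim}. Everything else is bookkeeping around the addition theorem and the vanishing mean of the harmonics.
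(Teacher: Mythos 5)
Your proposal is correct and follows essentially the same route as the paper's proof: the same diagonal/off-diagonal split, the same reduction of the error to the same-cell quantities $\sum_i\int_{A_i}\int_{A_i}\bigl(1-P_n^{(d)}(\langle\mathbf{x},\mathbf{y}\rangle)\bigr)\,d\sigma\,d\sigma$, the same split of the spectrum at $n=[N^{1/d}]$, and the same key inputs (diameter bound, derivative of $P_n^{(d)}$ maximal at the endpoint and of size $\asymp_d n^2$ via \eqref{LegendreJacobi}, \eqref{JacobiDifferen}, \eqref{JacobiMax}, trivial bound for high degrees). The only cosmetic differences are that you extract $a_0$ through the addition theorem and the vanishing means of the $Y_{n,k}$, $n\geq1$, where the paper simply completes the off-diagonal sum to the full double integral $\int_{\mathbb{S}^{d}}\int_{\mathbb{S}^{d}}K_d\,d\sigma\,d\sigma=a_0$ and then applies the mean value theorem to the cell integrals (formulas \eqref{integr}--\eqref{dif2}) — your exact identity with $Q_n\geq0$ has the small bonus of showing the left-hand side of \eqref{theoremDet} is nonnegative — and that your concern about staying above the largest zero of $P_n^{(d)}$ is unnecessary, since the endpoint-maximum property of the derivative holds on all of $[-1,1]$.
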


Let us apply the estimate (\ref{theoremDet}) for the reproducing kernels of Hilbert spaces and compare it with known estimates of worst-case error in these spaces. Before that, we need some additional background.

We denote by $\{Y_{\ell,k}^{(d)}: k=1,\ldots, Z(d,\ell)\}$ a collection of 
$\mathbb{L}_{2}(\sigma_{d})$-orthonormal real spherical harmonics (homogeneous harmonic  polynomials in $d+1$ variables  restricted to $\mathbb{S}^{d}$) of degree $\ell$ (see, e.g., \cite{Mueller1966:spherical_harmonics}), where
\begin{equation}\label{Zd}
  Z(d,0)=1,\quad Z(d,\ell)=(2\ell+d-1)
  \dfrac{\Gamma(\ell+d-1)}{\Gamma(d)\Gamma(\ell+1)}\sim
  \frac{2}{\Gamma(d)}\ell^{d-1}, \quad \ell\rightarrow\infty.
 \end{equation}
Each spherical harmonic  $Y_{\ell,k}^{(d)}$ of exact degree $\ell$ is an eigenfunction of the negative Laplace-Beltrami operator $-\Delta^{*}_{d}$ with eigenvalue
$ \lambda_{\ell}:=\ell(\ell+d-1) $.
 
The spherical harmonics of degree $\ell$ satisfy the addition theorem: 
\begin{equation}\label{additiontheorem}
 \sum\limits_{k=1}^{Z(d,\ell)}Y_{\ell,k}^{(d)}(\mathbf{x})Y_{\ell,k}^{(d)}(\mathbf{y})=Z(d,\ell)P_{\ell}^{(d)}(\langle\mathbf{x},\mathbf{y}\rangle).
 \end{equation}

The Sobolev space  $\mathbb{H}^{s}(\mathbb{S}^{d})$ 
 for $s\geq 0$ consists of all functions $f\in\mathbb{L}_{2}(\mathbb{S}^{d})$ 
with finite norm 
 \begin{equation}\label{normSobolev}
\|f\|_{\mathbb{H}^{s}}=\bigg(\sum\limits_{\ell=0}^{\infty}\sum\limits_{k=1}^{Z(d,\ell)}
\left(1+\lambda_{\ell}\right)^{s}|\hat{f}_{\ell,k}|^{2}\bigg)^{\frac{1}{2}}, 
\end{equation}
 where the Laplace-Fourier coefficients are given by the formula
\begin{equation*}
  \hat{f}_{\ell,k}:=(f,Y_{\ell,k}^{(d)})_{\mathbb{S}^{d}}=
  \int_{\mathbb{S}^{d}}f(\mathbf{x})Y_{\ell,k}^{(d)}(\mathbf{x})
  d\sigma_{d}(\mathbf{x}).
\end{equation*}

The worst-case (cubature) error of the equal weight numerical integration rule $Q[X_{N}]$ in a
Banach space $B$ of continuous functions on $\mathbb{S}^{d}$ with norm
$\|\cdot\|_{B}$ is defined by
 \begin{equation}\label{wce}
 \mathrm{wce}(Q[X_{N}];B):=\sup\limits_{f\in B,\|f\|_{B}\leq1}
 \left|\frac{1}{N}\sum\limits_{i=1}^{N}f(\mathbf{x}_{i})-\int_{\mathbb{S}^{d}}f(\mathbf{x})d\sigma_{d}(\mathbf{x})\right|.
 \end{equation}

The worst-case error for the Sobolev space $\mathbb{H}^{s}(\mathbb{S}^{d})$ can be expressed as (see, e.g., \cite{Brauchart-Saff-Sloan+2014:qmc_designs}) 
  \begin{equation}\label{wceKernelHs}
\mathrm{wce}(Q[X_{N}];\mathbb{H}^{s}(\mathbb{S}^{d}))^{2}=\frac{1}{N^{2}}\sum\limits_{i,j=1}^{N}\tilde{K}^{(s)}_{d}(\langle\mathbf{x}_{i},\mathbf{x}_{j}\rangle),
 \end{equation}
  where $\tilde{K}^{(s)}_{d}$ denotes the reproducing kernel Hilbert space  $\mathbb{H}^{s}(\mathbb{S}^{d})$, $s>\frac{d}{2}$, with the constant term removed
 \begin{equation}\label{kernelHs}
 \tilde{K}^{(s)}_{d}(\mathbf{x},\mathbf{y})=\sum\limits_{\ell=1}^{\infty}(1+\lambda_{\ell})^{-s}Z(d,\ell)
 P_{\ell}^{(d)}(\langle\mathbf{x},\mathbf{y}\rangle).
 \end{equation}

If $\tilde{K}^{(s)}_{d}(\mathbf{x},\mathbf{y})$ is given by (\ref{kernelHs}), then
\begin{multline}\label{probabilSobol}
\frac{1}{N^{2}}\int\limits_{A_{1}}...\int\limits_{A_{N}}\sum\limits_{i,j=1}^{N}\tilde{K}^{(s)}_{d}(\langle\textbf{x}_{i},\mathbf{x}_{j}\rangle)d\sigma_{1}^{*}(\mathbf{x}_{1})...d\sigma_{N}^{*}(\mathbf{x}_{N}) \\
\ll 
\begin{cases}
 N^{-\frac{2s}{d}}, & \text{if }
  \frac{d}{2}<s<1+\frac{d}{2}, \\
N^{-1-\frac{2}{d}}\ln N , & \text{if }  s=1+\frac{d}{2}, \\
N^{-1-\frac{2}{d}}, & \text{if }  s>1+\frac{d}{2}.
  \end{cases}
\end{multline}
Here and further we use the Vinogradov notation $a_{n}\ll b_{n}$  to mean that there exists positive constant $C$  independent of $n$ such that 
 $a_{n}\leq  C b_{n}$  for all $n$.

In \cite{Brauchart-Hesse2007:numerical_integration} it was proved that there exists $C_{d,s}>0$, such that for every $N$-point spherical $t$-design  $X_{N}$ on $\mathbb{S}^{d}$ with $N\asymp t^{d}$ 
\begin{align}\label{BrauchHesseUpper}
\mathrm{wce}(Q[X_{N}];\mathbb{H}^{s}(\mathbb{S}^{d}))^{2}\leq \frac{C_{s,d}}{N^{\frac{2s}{d}}}.
\end{align}

Let  the space $\mathbb{H}^{(\frac{d}{2},\gamma)}(\mathbb{S}^{d})$, $\gamma>\dfrac{1}{2}$, 
(see \cite{GrabnerStepanyuk2018}) be  the  set of all functions $f\in\mathbb{L}_{2}(\mathbb{S}^{d})$ with finite norm
\begin{align*}
\|f\|_{\mathbb{H}^{(\frac{d}{2},\gamma)}}^2:=\sum\limits_{\ell=0}^{\infty}
\left(1+\lambda_{\ell}\right)^{\frac{d}{2}}\left(\ln\left(3+\lambda_{\ell}\right)\right)^{2\gamma}\sum\limits_{k=1}^{Z(d,\ell)}|\hat{f}_{\ell,k}|^{2}<\infty.
\end{align*}

The worst-case error for the space $\mathbb{H}^{(\frac{d}{2},\gamma)}(\mathbb{S}^{d})$, $\gamma>\dfrac{1}{2}$, can be computed by the formula  
\begin{align*}
\mathrm{wce}(Q[X_{N}];\mathbb{H}^{(\frac{d}{2},\gamma)}(\mathbb{S}^{d}))^{2}=\frac{1}{N^{2}}\sum\limits_{i,j=1}^{N}\tilde{K}^{(\frac{d}{2},\gamma)}(\mathbf{x}_{i},\mathbf{x}_{j}),
\end{align*}
  where $\tilde{K}^{(\frac{d}{2},\gamma)}$ denotes the reproducing kernel of the Hilbert space
    $\mathbb{H}^{(\frac{d}{2},\gamma)}(\mathbb{S}^{d})$, ${\gamma>\frac{1}{2}}$, with the constant term removed
 \begin{equation}\label{kernelH}
 \tilde{K}^{(\frac{d}{2},\gamma)}(\mathbf{x},\mathbf{y})=\sum\limits_{\ell=1}^{\infty}\left(1+\lambda_{\ell}\right)^{-\frac{d}{2}}\left(\ln\left(2+\lambda_{\ell}\right)\right)^{-2\gamma}Z(d,\ell)
 P_{\ell}^{(d)}(\langle\mathbf{x},\mathbf{y}\rangle).
 \end{equation}

From (\ref{theoremDet}) we have that for  $\tilde{K}^{(\frac{d}{2},\gamma)}$, defined  by formula (\ref{kernelH}), the following estimate is true
\begin{align}\label{probabilH}
\frac{1}{N^{2}}\int\limits_{A_{1}}...\int\limits_{A_{N}}\sum\limits_{i,j=1}^{N}\tilde{K}^{(\frac{d}{2},\gamma)}(\langle\textbf{x}_{i},\mathbf{x}_{j}\rangle)d\sigma_{1}^{*}(\mathbf{x}_{1})...d\sigma_{N}^{*}(\mathbf{x}_{N})
\ll N^{-1}\left(\ln N\right)^{-2\gamma+1}.
\end{align}

 In \cite{GrabnerStepanyuk2018} it was proved that there exist constants  $C_{d,\gamma}^{(1)} $ and $C_{d,\gamma}^{(2)}$,  such that for every $N$-point well separated spherical $t$-design  $X_{N}$ on $\mathbb{S}^{d}$ 
\begin{align}\label{GS}
 C_{d,\gamma}^{(1)}N^{-1}\left(\ln N\right)^{-2\gamma+1}\leq
   \mathrm{wce}(Q[X_{N}];\mathbb{H}^{(\frac{d}{2},\gamma)}(\mathbb{S}^{d}))^{2}
   \leq C_{d,\gamma}^{(2)}N^{-1}\left(\ln N\right)^{-2\gamma+1}.
\end{align}

\subsection{Estimates for energy integrals in the singular case}
\label{singular}

In this subsection we consider the case of singular kernel, when in the energy (\ref{energy})  the diagonal terms are omitted. We denote it by
\begin{align}\label{energy0}
\tilde{E}(K_{d}, X_{N}):=\frac{1}{N^{2}}\sum\limits_{i,j=1, \atop i\neq j}^{N}K_{d}(\langle\mathbf{x}_{i},\mathbf{x}_{j}\rangle).
\end{align}

\begin{thm}\label{theorem_sing}
 Let $K_{d}$ is a continuous function on $[-1,1)$, $\lim\limits_{x\rightarrow1}K_{d}(x)=\infty$, $\int\limits_{\mathbb{S}^{d}}K_{d}(\langle\mathbf{x},\mathbf{y}\rangle)d\sigma(\mathbf{x})d\sigma(\mathbf{y})<\infty$ and there exist $c_{2}>0$ and $\mathbf{y}_{i}\in A_{i}$, such that  each region $A_{i}$ of an area regular partition  
$\{A_{i}\}_{i=1}^{N}$ contains a  spherical cap $S(\mathbf{y}_{i}; c_{2}N^{-\frac{1}{d}})$ in its interior.

Then
\begin{multline}\label{theoremSingular}
\int\limits_{A_{1}}...\int\limits_{A_{N}}\tilde{E}(K_{d}, X_{N})d\sigma_{1}^{*}(\mathbf{x}_{1})...d\sigma_{N}^{*}(\mathbf{x}_{N})\\
= a_{0}+\frac{1}{N}\mathcal{O}\left(
\int\limits_{\cos(c_{2}N^{-\frac{1}{d}})}^{1}K_{d}(t)(1-x^{2})^{\frac{d}{2}-1}dx +\max\limits_{-1\leq x\leq1-\frac{2c_{2}^{2}}{\pi^{2}}}K_{d}(x)\right).
\end{multline}
\end{thm}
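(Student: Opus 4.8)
The plan is to compute the $N$-fold integral exactly, collapsing it to a single diagonal correction, and then to bound that correction by treating $K_d$ separately near and away from the diagonal $\langle\mathbf{x},\mathbf{y}\rangle=1$.

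First I would use that each $\sigma_i^*$ is a probability measure and that $K_d(\langle\mathbf{x}_i,\mathbf{x}_j\rangle)$ depends on only two of the variables, so integrating out the other $N-2$ is trivial. Writing $d\sigma_i^*=N\,d\sigma|_{A_i}$, the factor $N$ from each of the two surviving integrations cancels the prefactor $N^{-2}$, giving
\[
\int\limits_{A_1}\!\cdots\!\int\limits_{A_N}\tilde{E}(K_d,X_N)\,d\sigma_1^*\cdots d\sigma_N^*
=\sum_{i\neq j}\int\limits_{A_i}\int\limits_{A_j}K_d(\langle\mathbf{x},\mathbf{y}\rangle)\,d\sigma(\mathbf{x})\,d\sigma(\mathbf{y}).
\]
Because $\{A_i\}_{i=1}^N$ covers $\mathbb{S}^d$ and $\int_{\mathbb{S}^d}\int_{\mathbb{S}^d}K_d(\langle\mathbf{x},\mathbf{y}\rangle)\,d\sigma\,d\sigma=a_0$ is the finite energy integral, completing the sum with the diagonal blocks and subtracting them again recasts the right-hand side as $a_0-\Sigma$, where $\Sigma:=\sum_{i=1}^N\int_{A_i}\int_{A_i}K_d(\langle\mathbf{x},\mathbf{y}\rangle)\,d\sigma(\mathbf{x})\,d\sigma(\mathbf{y})$. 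The theorem thus reduces to estimating the diagonal sum $\Sigma$ by the error term in (\ref{theoremSingular}).

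To bound a single block I would fix $\mathbf{x}\in A_i$ and split the inner integral over $\mathbf{y}\in A_i$ at the angular radius $c_2N^{-1/d}$ of the contained cap. On the near part $\langle\mathbf{x},\mathbf{y}\rangle\ge\cos(c_2N^{-1/d})$ I enlarge the domain to the full cap $S(\mathbf{x};c_2N^{-1/d})$ and apply the surface-area formula (\ref{capArea}), turning the contribution into $\asymp\int_{\cos(c_2N^{-1/d})}^{1}K_d(t)(1-t^2)^{d/2-1}\,dt$; this is where the singularity of $K_d$ at $t=1$ is absorbed, and the integral is finite precisely because the energy integral is. On the far part $\langle\mathbf{x},\mathbf{y}\rangle$ is bounded away from $1$; using the elementary inequalities (\ref{sinIneq}) the cutoff corresponds to $\langle\mathbf{x},\mathbf{y}\rangle\le 1-2c_2^2/\pi^2$, where $K_d$ is bounded by $\max_{-1\le t\le 1-2c_2^2/\pi^2}K_d(t)$, finite since $[-1,1-2c_2^2/\pi^2]$ is compact in $[-1,1)$, while the far region carries measure at most $\sigma(A_i)=1/N$. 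The contained-cap hypothesis is what makes these cap comparisons and the attendant constants uniform over $i$.

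Integrating the single-block bound in $\mathbf{x}$ over $A_i$ and summing over $i$, with $\sum_i\sigma(A_i)=1$, produces the weighted singular integral from the near part and $\tfrac1N\max_{-1\le t\le 1-2c_2^2/\pi^2}K_d(t)$ from the far part, which together constitute the error term of (\ref{theoremSingular}). The step I expect to be the main obstacle is the near-diagonal one: one must justify, uniformly in $\mathbf{x}$ and $i$, the reduction of the two-dimensional integral over $A_i\cap S(\mathbf{x};c_2N^{-1/d})$ to the one-dimensional radial integral against $(1-t^2)^{d/2-1}$, control the discrepancy between this set and the full cap, and fix the cutoff radius so that the singular integral and the far maximum combine into one clean $\mathcal{O}$-term. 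Everything else is routine bookkeeping with the cap-area asymptotics (\ref{capArea}) and the probability-measure normalisation.
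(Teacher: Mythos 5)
Your opening reduction is exactly the paper's first step (\ref{integrSingular}) and is correct: everything collapses to $a_0-\Sigma$ with $\Sigma=\sum_{i=1}^{N}\int_{A_i}\int_{A_i}K_d(\langle\mathbf{x},\mathbf{y}\rangle)\,d\sigma(\mathbf{x})\,d\sigma(\mathbf{y})$. The gaps are in your estimate of $\Sigma$, and there are two. First, your bookkeeping does not produce the claimed error term: writing $I_N:=\int_{\cos(c_2N^{-1/d})}^{1}K_d(t)(1-t^2)^{d/2-1}\,dt$, your near-part bound is $\asymp I_N$ for each fixed $\mathbf{x}$, hence $\sigma(A_i)\,I_N$ per block, hence $\mathcal{O}(I_N)$ after summing over the $N$ blocks --- \emph{not} $\frac1N\mathcal{O}(I_N)$ as (\ref{theoremSingular}) requires. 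Your final paragraph silently identifies these two quantities, but they differ by a factor of $N$. Second, the far-part bound fails: by (\ref{sinIneq}) one only gets $\cos(c_2N^{-1/d})\leq 1-\frac{2c_2^2}{\pi^2}N^{-2/d}$, which tends to $1$, so the far region $\{\langle\mathbf{x},\mathbf{y}\rangle<\cos(c_2N^{-1/d})\}$ contains pairs whose inner product is within $\mathcal{O}(N^{-2/d})$ of $1$, where $K_d$ blows up; the fixed-interval maximum $\max_{-1\leq x\leq 1-2c_2^2/\pi^2}K_d(x)$ does not majorize the integrand there. Controlling that annulus needs the diameter hypothesis $\mathrm{diam}(A_i)\leq CN^{-1/d}$ of Section~\ref{nonsingular} (so that all within-block inner products lie in $[1-\frac{C^2}{2}N^{-2/d},1]$), after which it is absorbed into the singular integral by lowering its limit; the contained-cap hypothesis alone cannot do this, since it allows $A_i$ to have large diameter.

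You should also know that the factor-$N$ gap cannot be closed, because the error term in (\ref{theoremSingular}) is itself too strong. Take $K_{s,d}(x)=2^{-s/2}(1-x)^{-s/2}$ and any partition satisfying the hypothesis: restricting both variables to the inscribed caps and using (\ref{capArea}) gives
\begin{equation*}
\Sigma\;\geq\;\sum_{i=1}^{N}\int_{S(\mathbf{y}_i;c_2N^{-1/d})}\int_{S(\mathbf{y}_i;c_2N^{-1/d})}|\mathbf{x}-\mathbf{y}|^{-s}\,d\sigma\,d\sigma\;\gg\;N\cdot N^{-2}\cdot N^{s/d}\;=\;N^{-1+s/d},
\end{equation*}
while the right-hand side of (\ref{theoremSingular}) is $\frac1N\mathcal{O}(I_N+\max)=\mathcal{O}(N^{-1})$ for this kernel (since $I_N\asymp N^{-1+s/d}$ and the maximum is a constant); for $s>0$ this is a contradiction for large $N$. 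The correct error term is $\mathcal{O}(I_N)+\frac1N\mathcal{O}(\max)$ --- precisely what your argument delivers once the far part is repaired --- and it still yields (\ref{RieszProb}) after multiplying by $N^2$. The paper's own proof manufactures the spurious $\frac1N$ at (\ref{integrSingular1})--(\ref{integrSingular2}): the remainder bound there has the same near-pair defect as your far part, and (\ref{a1}) is applied to the cap $S(\mathbf{y}_i;c_2N^{-1/d})$, which is centred at the fixed point $\mathbf{y}_i$ rather than at the integration variable $\mathbf{y}$, with a factor $N$ lost in the normalization $\sigma_i^*=N\sigma$. So your decomposition (splitting at the inner product, with the cap centred at the moving point) is the sounder route; the fix is to weaken the claimed error term, not to force your estimate to match it.
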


The existence of an area regular partition  
$\{A_{i}\}_{i=1}^{N}$, such that  each region $A_{i}$  contains the  spherical cap $S(\mathbf{y}_{i}; c_{2}N^{-\frac{1}{d}})$ in its interior was shown by Gigante and Leopardi in \cite{GiganteLeopardi2017:Diameter}.

Let $K_{s,d}$ be the Riesz kernel: $K_{s,d}(x)=\frac{1}{2^{\frac{s}{2}}}(1-x)^{-\frac{s}{2}}$, $0<s<d$, then
\begin{multline}\label{integrRiesz}
\frac{1}{N}
\int\limits_{1-cN^{-\frac{2}{d}}}^{1}K_{s,d}(t)(1-t^{2})^{\frac{d}{2}-1}dt\ll  \frac{1}{N}
\int\limits_{\cos(c_{2}N^{-\frac{1}{d}})}^{1}(1-x)^{\frac{d}{2}-\frac{s}{2}-1}dx\\
\ll \frac{1}{N} (N^{-\frac{2}{d}})^{\frac{d}{2}-\frac{s}{2}}= N^{-2+\frac{s}{d}}
\end{multline}
and
\begin{align}\label{maxRi}
\frac{1}{N}\max\limits_{-1\leq x\leq1-\frac{2c_{2}^{2}}{\pi^{2}}}K_{s,d}(x)\ll N^{-1+\frac{s}{d}}.
\end{align}

Thus, we have that for the Riesz kernel $K_{s,d}$ the following estimate holds
\begin{multline}\label{RieszProb}
\int\limits_{A_{1}}...\int\limits_{A_{N}}\sum\limits_{i,j=1, \atop i\neq j}^{N}K(\langle\mathbf{x}_{i},\mathbf{x}_{j}\rangle)d\sigma_{1}^{*}(\mathbf{x}_{1})...d\sigma_{N}^{*}(\mathbf{x}_{N})) \\
=
a_{0}N^{2}+\mathcal{O}(N^{1+\frac{s}{d}})= \frac{\Gamma(\frac{d+1}{2})\Gamma(d-s)}{\Gamma(d-s+1)\Gamma(d-\frac{s}{2})}N^{2}+\mathcal{O}(N^{1+\frac{s}{d}}).
\end{multline}

\subsection{Comparison of the estimates for some probabilistic and deterministic point sets}
\label{comparison}

 Probabilistic models are often used to show existence of good point sets. But the comparison shows that in many cases $t$-designs give better bounds for the quality measure under consideration.

Indeed, on the basis of (\ref{probabilSobol}) and (\ref{BrauchHesseUpper}) we can summarize, that in the case ${\frac{d}{2}<s<1+\frac{d}{2}}$ spherical $t$-designs are as good as probabilistic point sets, and in case $s\geq\frac{d}{2}+1$, spherical $t$-designs give  better bounds for  $\mathrm{wce}(Q[X_{N}];\mathbb{H}^{s}(\mathbb{S}^{d}))$.

From (\ref{probabilH}) and (\ref{GS}) it follows, that  for the worst-case error \linebreak
 $ \mathrm{wce}(Q[X_{N}];\mathbb{H}^{(\frac{d}{2},\gamma)}(\mathbb{S}^{d}))$,  spherical $t$-designs are as good as probabilistic point sets.

Comparing formula (\ref{theorem1}) with (\ref{RieszProb}), we have that for the Riesz $s$-energy, $0<s<d$, well-separated $t$-designs are as good as probabilistic point sets.
Also according to relations  (\ref{KuijlaarsSaff}) and (\ref{Wagner1}), with respect to the order of the error term, well-separated $t$-designs and probabilistic point sets are as good  as point sets which  minimize the Riesz $s$ energy (in the case $d-2<s<d$).

\section{Preliminaries}
\label{prelim}

In this paper we use the Pochhammer symbol $(a)_{n}$, where
$n\in \mathbb{N}_{0}$ and $a\in \mathbb{R}$, defined by
\begin{equation*}
  (a)_{0}:=1, \quad (a)_{n}:=a(a+1)\ldots(a+n-1)\quad \mathrm{for} \quad
  n\in \mathbb{N},
\end{equation*}
which can be  written in the terms of the gamma function $\Gamma(z)$ by means of
\begin{equation}\label{Pochhammer}
 (a)_{\ell}=\frac{\Gamma(\ell+a)}{\Gamma(a)}.
 \end{equation}
 For fixed $a,b$ the following asymptotic equality is true
 \begin{equation}\label{gamma}
 \frac{\Gamma(n+a)}{\Gamma(n+b)}= n^{a-b}\Big(1+\mathcal{O}\Big(\frac{1}{n}\Big) \Big) \ \ \mathrm{as} \ \ n\rightarrow \infty.
\end{equation}

For any integrable function $f: [-1, 1]\rightarrow \mathbb{R}$ (see, e.g.,
\cite{Mueller1966:spherical_harmonics}) we have
\begin{equation}\label{a1}
 \int\limits_{\mathbb{S}^{d}}f(\langle\mathbf{x},\mathbf{y}\rangle)d\sigma_{d}(\mathbf{x})=\frac{\Gamma(\frac{d+1}{2})}{\sqrt{\pi}\Gamma(\frac{d}{2})}\int\limits_{-1}^{1}f(t)(1-t^{2})^{\frac{d}{2}-1}dt \quad \forall \mathbf{y}\in \mathbb{S}^{d}.
\end{equation}

The Jacobi polynomials $\mathcal{P}_{\ell}^{(\alpha,\beta)}(x)$ are the polynomials
orthogonal over the interval $[-1,1]$ with the weight function
$w_{\alpha,\beta}(x)=(1-x)^{\alpha}(1+x)^{\beta}$ and normalized by the
relation
\begin{equation}\label{JacobiMax}
  \mathcal{P}_{\ell}^{(\alpha,\beta)}(1)=\binom {\ell+\alpha}\ell=
  \frac{(1+\alpha)_{\ell}}{\ell!}\sim\frac{1}{\Gamma(1+\alpha)}\ell^{\alpha},
  \quad \alpha,\beta>-1.
 \end{equation}
 (see, e.g., \cite[(5.2.1)]{Magnus-Oberhettinger-Soni1966:formulas_theorems}).

 Notice that
 \begin{equation}\label{LegendreJacobi}
 P_{n}^{(d)}(x)=\frac{n!}{(d/2)_{n}}\mathcal{P}_{n}^{(\frac{d}{2}-1, \frac{d}{2}-1)}(x).
 \end{equation}

 For fixed ${\alpha, \beta>-1}$ and ${0< \theta<\pi}$, the following relation
 gives an asymptotic approximation for $\ell\rightarrow\infty$ (see,
 e.g.,\cite[Theorem 8.21.13]{Szegoe1975:orthogonal_polynomials})
\begin{multline*}
  \mathcal{P}_{\ell}^{(\alpha,\beta)}(\cos \theta)=\frac{1}{\sqrt{\pi}}\ell^{-1/2}
  \Big(\sin\frac{\theta}{2}\Big)^{-\alpha-1/2}
  \Big(\cos\frac{\theta}{2}\Big)^{-\beta-1/2}\\
  \times\Big\{\cos \Big(\Big(\ell+\frac{\alpha+\beta+1}{2}\Big)\theta-
  \frac{2\alpha+1}{4}\pi\Big)+\mathcal{O}(\ell\sin\theta)^{-1}\Big\}.
\end{multline*}
Thus, for
$c_{\alpha,\beta}\ell^{-1}\leq\theta\leq\pi-c_{\alpha,\beta}\ell^{-1}$ the last
asymptotic equality yields
 \begin{equation}\label{JacobiIneq}
   |\mathcal{P}_{\ell}^{(\alpha,\beta)}(\cos \theta)|\leq \tilde{c}_{\alpha,\beta}
   \ell^{-1/2}(\sin\theta)^{-\alpha-1/2}+
 \tilde{c}_{\alpha,\beta}\ell^{-3/2}(\sin\theta)^{-\alpha-3/2}, \quad\alpha\geq\beta.
 \end{equation}

The following differentiation formula holds
\begin{equation}\label{JacobiDifferen}
   \frac{d}{dx}\mathcal{P}_{n}^{(\alpha,\beta)}(x)=\frac{\alpha+\beta+n+1}{2}\mathcal{P}_{n-1}^{(\alpha+1,\beta+1)}(x).
 \end{equation}

If $\lambda>d-1$, $0<s<d$, (using formula  \cite[(5.3.4)]{Magnus-Oberhettinger-Soni1966:formulas_theorems}) and expressing 
the Gegenbauer polynomials  via Jacobi polynomials  (see, e.g.,
 \cite[(5.3.1)]{Magnus-Oberhettinger-Soni1966:formulas_theorems})),
we have that  for ${-1<x<1}$ the following expansion holds 
\begin{multline}\label{expansionGegenbauer1}
(1-x)^{-\frac{s}{2}}=2^{2\lambda-\frac{s}{2}}\pi^{-\frac{1}{2}}\Gamma(\lambda)\Gamma\Big(\lambda-\frac{s}{2}+\frac{1}{2}\Big) \\
\times\sum\limits_{n=0}^{\infty}\frac{(n+\lambda)(\frac{s}{2})_{n}}{\Gamma(n+2\lambda-\frac{s}{2}+1)}\frac{(2\lambda)_{n}}{(\lambda+\frac{1}{2})_{n}}\mathcal{P}_{n}^{(\lambda-\frac{1}{2},\lambda-\frac{1}{2})}(x).
\end{multline}

\section{Proof of Theorems 1-3}
\label{proofTheor}

\begin{proof}[Proof of Theorem  \ref{theoremRisz}]

 For each $i\in\{1,\ldots,N\}$ we divide the sphere $\mathbb{S}^{d}$ into an
  upper hemisphere $H_{i}^{+}$ with 'north pole' $\mathbf{x}_{i}$ and a lower
  hemisphere $H_{i}^{-}$:
\begin{equation*}
H_{i}^{+}:=\Big\{\mathbf{x}\in\mathbb{S}^{d}\Big|\langle\mathbf{x}_{i},\mathbf{x}\rangle\geq0 \Big\},
\end{equation*}
\begin{equation*}
H_{i}^{-}:=\mathbb{S}^{d}\setminus H_{i}^{+}.
\end{equation*}
We split the $s$-energy into two parts
\begin{equation}\label{1split}
E_{d}^{(s)}(X_{N})=\frac{1}{2}\sum\limits_{j=1}^{N}
  {\mathop{\sum}\limits_{i=1,\atop \textbf{x}_{i}\in H^{\pm}_{i}\setminus S(\pm x_{j};\alpha_{N})}^{N}}|\mathbf{x}_{i}-\mathbf{x}_{j}|^{-s}+
  \frac{1}{2}\sum\limits_{j=1}^{N}
  {\mathop{\sum}\limits_{i=1,\atop \textbf{x}_{i}\in  S(- x_{j};\alpha_{N})}^{N}}|\mathbf{x}_{i}-\mathbf{x}_{j}|^{-s}.
\end{equation}

From (\ref{wellSeparat}) and the fact the spherical cap $S(- \mathbf{x}_{j};\alpha_{N})$ contains at most one point of $X_{N}$, the second term in (\ref{1split}), where the scalar product is close to $-1$, can be bounded from above by
\begin{equation}\label{estim1split}
  \frac{1}{2}\sum\limits_{j=1}^{N}
  {\mathop{\sum}\limits_{i=1,\atop \textbf{x}_{i}\in  S(- x_{j};\alpha_{N})}^{N}}|\mathbf{x}_{i}-\mathbf{x}_{j}|^{-s}<\frac{1}{2}N\Big(4-\frac{c_{1}^{2}}{4N^{\frac{2}{d}}} \Big)^{-\frac{s}{2}}<\frac{1}{2}N.
\end{equation}

Noting that
\begin{equation}\label{distance}
|\mathbf{x}_{i}-\mathbf{x}_{j}|^{-1}=\frac{1}{\sqrt{2}}(1-\langle\textbf{x}_{i},\mathbf{x}_{j}\rangle)^{-\frac{1}{2}},
\end{equation}
taking into account that the Jacobi  series (\ref{expansionGegenbauer1}) converges uniformly in \linebreak ${\Big[-1+\frac{c^{2}_{1}}{8N^{\frac{2}{d}}},1-\frac{c^{2}_{1}}{8N^{\frac{2}{d}}} \Big]}$,
 and substituting 
$\lambda=\frac{d}{2}+K+\frac{1}{2}$, $K>\frac{d}{2}+1$ in the expansion (\ref{expansionGegenbauer1}),  we get that 
\begin{multline}\label{expansionSubst}
\frac{1}{2}\sum\limits_{j=1}^{N}
  {\mathop{\sum}\limits_{i=1,\atop \textbf{x}_{i}\in H^{\pm}_{i}\setminus S(\pm x_{j};\alpha_{N})}^{N}}|\mathbf{x}_{i}-\mathbf{x}_{j}|^{-s}=
  \frac{1}{2^{1+\frac{s}{2}}}\sum\limits_{j=1}^{N}
  {\mathop{\sum}\limits_{i=1,\atop \textbf{x}_{i}\in H^{\pm}_{i}\setminus S(\pm x_{j};\alpha_{N})}^{N}}(1-\langle\textbf{x}_{i},\mathbf{x}_{j}\rangle)^{-\frac{s}{2}} \\
  =\frac{1}{2}E_{h_{t}}(X)+\frac{1}{2}E_{r_{t}}(X),
\end{multline}
where
\begin{multline}\label{s_tDefinition}
h_{t}(x)=h_{t}(s,d,K,t,x):=2^{d+2K-s+1}\pi^{-\frac{1}{2}}\Gamma\Big(\frac{d}{2}+K+\frac{1}{2}\Big)\Gamma\Big(\frac{d}{2}+K-\frac{s}{2}+1\Big) \\
\times\sum\limits_{n=0}^{t}\frac{(n+\frac{d}{2}+K+\frac{1}{2})(\frac{s}{2})_{n}}{\Gamma(n+d+2K-\frac{s}{2}+2 )}\frac{(d+2K+1)_{n}}{(\frac{d}{2}+K+1)_{n}}
\mathcal{P}_{n}^{(\frac{d}{2}+K, \ \frac{d}{2}+K)}(x),
\end{multline}
\begin{multline}\label{r_tDefinition}
r_{t}(x)=r_{t}(s,d,K,t,x):=2^{d+2K-s+1}\pi^{-\frac{1}{2}}\Gamma\Big(\frac{d}{2}+K+\frac{1}{2}\Big)\Gamma\Big(\frac{d}{2}+K-\frac{s}{2}+1\Big) \\
\times\sum\limits_{n=t+1}^{\infty}\frac{(n+\frac{d}{2}+K+\frac{1}{2})(\frac{s}{2})_{n}}{\Gamma(n+d+2K-\frac{s}{2}+2 )}\frac{(d+2K+1)_{n}}{(\frac{d}{2}+K+1)_{n}}
\mathcal{P}_{n}^{(\frac{d}{2}+K, \ \frac{d}{2}+K)}(x),
\end{multline}
and 
\begin{align}\label{energyParts}
E_{U}(X):=\sum\limits_{j=1}^{N}
  {\mathop{\sum}\limits_{i=1,\atop \textbf{x}_{i}\in H^{\pm}_{i}\setminus S(\pm x_{j};\alpha_{N})}^{N}}U(\langle\textbf{x}_{i},\mathbf{x}_{j}\rangle).
\end{align}

To finish the proof we will need following two  lemmas. We postpone the proof of lemmas to the next section.

\begin{lemma}\label{lem1} Let $d\geq2$  
  be fixed and   $N\asymp t^{d}$.  Then for any $K>\frac{d}{2}$,
  $K\in \mathbb{N}$, and $0<s<d$ there exists positive constant $C_{d,s} $, such 
  that
\begin{align}\label{Lemma1}
E_{r_{t}}(X)\leq C_{d,s} N^{1+\frac{s}{d}}.
\end{align}
\end{lemma}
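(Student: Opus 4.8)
The plan is to prove a pointwise upper bound on $r_t(\cos\theta)$ valid for $\theta$ bounded away from $0$ and $\pi$, and then to sum this bound over the admissible pairs in (\ref{energyParts}), organising the inner sum into dyadic angular shells around each point $\mathbf{x}_j$ and using the well-separation to count the points in each shell.

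First I would determine the size of the coefficients in (\ref{r_tDefinition}). Writing the Pochhammer symbols via (\ref{Pochhammer}) and applying the asymptotic (\ref{gamma}) to the resulting ratios of Gamma functions, the coefficient multiplying $\mathcal{P}_n^{(\frac d2+K,\frac d2+K)}$ behaves like $n^{s-\frac d2-K-1}$ as $n\to\infty$. Combining this with the Jacobi bound (\ref{JacobiIneq}) for $\alpha=\beta=\frac d2+K$, the $n$-th summand of $r_t(\cos\theta)$ is
\[
\ll n^{s-\frac d2-K-\frac32}(\sin\theta)^{-\frac d2-K-\frac12}+n^{s-\frac d2-K-\frac52}(\sin\theta)^{-\frac d2-K-\frac32}.
\]
Because $K>\frac d2$ and $s<d$, both exponents of $n$ are strictly less than $-1$, so the tail series over $n\ge t+1$ converges and yields
\[
|r_t(\cos\theta)|\ll t^{s-\frac d2-K-\frac12}(\sin\theta)^{-\frac d2-K-\frac12}+t^{s-\frac d2-K-\frac32}(\sin\theta)^{-\frac d2-K-\frac32},
\]
valid whenever $\theta\ge c_{d,K}/(t+1)$; on the admissible set this holds because the caps $S(\pm\mathbf{x}_j;\alpha_N)$ with $\alpha_N\asymp N^{-1/d}\asymp t^{-1}$ (see (\ref{alphaN}) and (\ref{NandT})) have been removed in (\ref{energyParts}).

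Second, I would count points. By the well-separation (\ref{wellSeparat}) the caps of angular radius $\asymp N^{-1/d}$ centred at distinct points of $X_N$ are pairwise disjoint, so by (\ref{capArea}) the number of $\mathbf{x}_i$ lying at angular distance at most $\theta$ from a fixed $\mathbf{x}_j$ is $\ll N\theta^{d}$. Fixing $j$ and decomposing the admissible indices into dyadic shells with $\theta_k=2^k\alpha_N$, each shell carries $\ll N\theta_k^{d}$ points and, by the pointwise bound, contributes (for the leading term)
\[
\ll N\theta_k^{d}\, t^{s-\frac d2-K-\frac12}\theta_k^{-\frac d2-K-\frac12}=N\,t^{s-\frac d2-K-\frac12}\,\theta_k^{\frac d2-K-\frac12},
\]
and the second term is handled identically. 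Since $K>\frac d2$ the exponent $\frac d2-K-\frac12$ is negative, so the geometric-type sum over $k$ is dominated by the innermost shell $\theta_0=\alpha_N\asymp t^{-1}$, giving $\ll N t^{s-d}=N^{s/d}$ per index $j$. Summing over the $N$ choices of $j$ produces $E_{r_t}(X)\ll N\cdot N^{s/d}=N^{1+\frac sd}$, which is the assertion.

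The main obstacle is exactly this innermost shell, where the bound on $|r_t|$ degenerates like $(\sin\theta)^{-\frac d2-K-\frac12}$ and most of the mass concentrates. There the asymptotic estimate (\ref{JacobiIneq}) is only available for $\theta\ge c_{d,K}/n$, so for $\theta\asymp\alpha_N$ the finitely many indices $t<n\ll c_{d,K}t$ must be treated separately, using the crude endpoint bound $|\mathcal{P}_n^{(\frac d2+K,\frac d2+K)}(\cos\theta)|\le\mathcal{P}_n^{(\frac d2+K,\frac d2+K)}(1)\asymp n^{\frac d2+K}$. The well-separation guarantees that this shell carries only $\mathcal{O}(1)$ points per $j$ (since $N\alpha_N^{d}\asymp 1$), and with the coefficient asymptotics the extra contribution is $\sum_{t<n\ll t}n^{s-1}\asymp t^{s}=N^{s/d}$ per $j$, of the same admissible order; hence the estimate is unaffected.
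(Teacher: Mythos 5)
Your argument is correct, and its first half coincides with the paper's: the coefficient asymptotics $n^{s-\frac{d}{2}-K-1}$ obtained from (\ref{Pochhammer}) and (\ref{gamma}), combined with the Jacobi bound (\ref{JacobiIneq}), give precisely the pointwise tail estimate (\ref{r_ineq1}). Where you genuinely differ is the summation step. The paper inserts (\ref{r_ineq1}) into $E_{r_{t}}(X)$ and then quotes the Brauchart--Hesse estimates \cite[(3.30), (3.33)]{Brauchart-Hesse2007:numerical_integration}, restated in (\ref{BrauchartHesse}), with $L=K$ and $k=1,2$ --- which is why it imposes $K>\frac{d+1}{2}$ --- whereas you prove the needed counting bound from scratch: well-separation makes the caps of radius $\asymp N^{-1/d}$ pairwise disjoint, so $\#\{i:\theta_{ij}\le\theta\}\ll N\theta^{d}$, and the dyadic-shell sum is dominated by the innermost shell because $\frac{d}{2}-K-\frac{1}{2}<0$. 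The substance is the same (the cited inequality is itself proved by such a packing argument), but your route is self-contained and valid for every integer $K>\frac{d}{2}$, which is what Lemma~\ref{lem1} actually asserts, while the paper's proof as written covers only $K>\frac{d+1}{2}$ (harmless, since in the proof of Theorem~\ref{theoremRisz} one may take $K$ as large as one likes). You also flag and repair a point the paper passes over silently: (\ref{JacobiIneq}) stems from an asymptotic valid only for $c_{\alpha,\beta}\ell^{-1}\le\theta\le\pi-c_{\alpha,\beta}\ell^{-1}$, so asserting (\ref{r_ineq1}) for all $0<\theta<\pi$ is not immediate; for $\theta\asymp\alpha_{N}$ and the finitely many indices $t<n\ll t$, your use of the endpoint bound $\mathcal{P}_{n}^{(\frac{d}{2}+K,\frac{d}{2}+K)}(1)\asymp n^{\frac{d}{2}+K}$, costing $\ll t^{s}\asymp N^{s/d}$ per separated point, closes this gap (alternatively, one can check that the right-hand side of (\ref{JacobiIneq}) dominates $n^{\frac{d}{2}+K}$ when $\theta\ll n^{-1}$, so the inequality persists there with a larger constant). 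One detail you should make explicit: for $\theta_{ij}>\frac{\pi}{2}$ the shells must be centred at $-\mathbf{x}_{j}$ rather than $\mathbf{x}_{j}$, since both the replacement $\sin\theta\asymp\theta$ and the count $N\theta^{d}$ degenerate near $\theta=\pi$; this costs nothing because $\sin\theta_{ij}^{+}=\sin\theta_{ij}^{-}$ and the same packing bound holds around the antipode.
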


\begin{lemma}\label{lem2} Let $d\geq2$  
  be fixed, let $(X_{N(t)})_{t}$ be a sequence of well-separated $t$-designs and$N\asymp t^{d}$.  Then for any $0<s<d$ and $K>\frac{d}{2}$,
  $K\in \mathbb{N}$, the following asymptotic equality holds
\begin{align}\label{Lemma2}
E_{h_{t}}(X)=   \frac{\Gamma(\frac{d+1}{2})\Gamma(d-s)}{\Gamma(d-s+1)\Gamma(d-\frac{s}{2})}N^{2}+\mathcal{O}(Nt^{s}).
\end{align}
\end{lemma}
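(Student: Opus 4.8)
The plan is to exploit that $h_t$, by its construction in (\ref{s_tDefinition}), is a univariate polynomial of degree exactly $t$, so that $\mathbf{x}\mapsto h_t(\langle\mathbf{x},\mathbf{x}_j\rangle)$ is a spherical polynomial of degree $\le t$ and the $t$-design property of $X_N$ integrates it \emph{exactly} (this is precisely why the truncation was placed at $n=t$). First I would pass from the restricted double sum $E_{h_t}(X)$ to the unrestricted one. For each fixed $j$, Definition~\ref{def1} together with (\ref{a1}) gives
\begin{equation*}
\sum_{i=1}^N h_t(\langle\mathbf{x}_i,\mathbf{x}_j\rangle)=N\int_{\mathbb{S}^d}h_t(\langle\mathbf{x},\mathbf{x}_j\rangle)\,d\sigma_d(\mathbf{x})=N\,\frac{\Gamma(\tfrac{d+1}{2})}{\sqrt\pi\,\Gamma(\tfrac d2)}\int_{-1}^1 h_t(u)(1-u^2)^{\frac d2-1}\,du=:N\,I_t ,
\end{equation*}
which is independent of $j$; summing over $j$ yields $\sum_{i,j}h_t(\langle\mathbf{x}_i,\mathbf{x}_j\rangle)=N^2 I_t$. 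Since the summation in (\ref{energyParts}) omits exactly the pairs with $\mathbf{x}_i\in S(\mathbf{x}_j;\alpha_N)\cup S(-\mathbf{x}_j;\alpha_N)$, I would write $E_{h_t}(X)=N^2 I_t$ minus these cap contributions.

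By the well-separation (\ref{wellSeparat}) and the choice (\ref{alphaN}) of $\alpha_N$, each of the two caps contains at most one point of $X_N$: the cap $S(\mathbf{x}_j;\alpha_N)$ contains only $\mathbf{x}_j$ itself, contributing $h_t(1)$, while $S(-\mathbf{x}_j;\alpha_N)$ contributes at most one value $h_t(u)$ with $u\le\cos\alpha_N$. Hence the correction equals $N\,h_t(1)+\mathcal{O}\big(N\max_{[-1,1]}|h_t|\big)$. To control both quantities I would combine Szeg\H{o}'s endpoint bound $|\mathcal{P}_n^{(\alpha,\alpha)}(x)|\le\mathcal{P}_n^{(\alpha,\alpha)}(1)=\binom{n+\alpha}{n}\sim n^{\alpha}/\Gamma(1+\alpha)$ on $[-1,1]$, valid for $\alpha=\tfrac d2+K\ge-\tfrac12$, with the coefficient asymptotics from (\ref{Pochhammer})--(\ref{gamma}): the $n$-th coefficient in (\ref{s_tDefinition}) behaves like $n^{\,s-\alpha-1}$, so $|\text{coeff}|\cdot\mathcal{P}_n^{(\alpha,\alpha)}(1)\asymp n^{s-1}$ and therefore $\max_{[-1,1]}|h_t|\ll\sum_{n\le t}n^{s-1}\asymp t^{s}$. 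Thus the entire cap correction is $\mathcal{O}(Nt^{s})$, already of the claimed order.

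It remains to prove $N^2 I_t=V_d(s)N^2+\mathcal{O}(Nt^s)$, i.e. $I_t-V_d(s)=\mathcal{O}(t^{\,s-d})$ (recall $N\asymp t^d$, so $Nt^s\asymp t^{d+s}$). Since $h_t+r_t=2^{-s/2}(1-x)^{-s/2}$ and, by (\ref{a1}) and (\ref{main_term}), $V_d(s)=\tfrac{\Gamma((d+1)/2)}{\sqrt\pi\Gamma(d/2)}\int_{-1}^1 2^{-s/2}(1-u)^{-s/2}(1-u^2)^{\frac d2-1}du$, one has
\begin{equation*}
V_d(s)-I_t=\frac{\Gamma(\tfrac{d+1}{2})}{\sqrt\pi\,\Gamma(\tfrac d2)}\int_{-1}^1 r_t(u)(1-u^2)^{\frac d2-1}\,du .
\end{equation*}
The key quantity is $J_n:=\int_{-1}^1\mathcal{P}_n^{(\alpha,\alpha)}(u)(1-u^2)^{\frac d2-1}du$. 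By parity $J_n=0$ for odd $n$; for $n=2m$ I would apply the quadratic transformation $\mathcal{P}_{2m}^{(\alpha,\alpha)}(x)=C_m\,\mathcal{P}_m^{(\alpha,-1/2)}(2x^2-1)$ (with $C_m\sim2^{\alpha}$), the substitution $v=2x^2-1$, and the classical Jacobi--power integral for $\int_{-1}^1\mathcal{P}_m^{(\alpha,-1/2)}(v)(1-v)^{\frac d2-1}(1+v)^{-1/2}dv$, which is available precisely because the exponent of $(1+v)$ now matches the lower Jacobi parameter $-\tfrac12$. Via (\ref{gamma}) this gives $J_{2m}\asymp m^{\,K-\frac d2}$; multiplying by the coefficient $\asymp m^{\,s-\frac d2-K-1}$ of $\mathcal{P}_{2m}^{(\alpha,\alpha)}$ in $r_t$ produces a summand of order $m^{\,s-d-1}$. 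As $s<d$ the tail converges, $\sum_{2m>t}(\cdots)\asymp t^{\,s-d}$, and this absolute convergence also legitimises the term-by-term integration (the integrand being absolutely integrable since $s<d$). Hence $V_d(s)-I_t=\mathcal{O}(t^{s-d})$, and adding the three contributions yields $E_{h_t}(X)=V_d(s)N^2+\mathcal{O}(Nt^s)$.

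The design reduction and the cap estimates are routine; the main obstacle is the last step, where the tail of the Jacobi expansion of $(1-x)^{-s/2}$ must be integrated against the \emph{different} weight $(1-u^2)^{d/2-1}$. Orthogonality is not directly applicable, since the two weights $(1-u^2)^{\alpha}$ and $(1-u^2)^{d/2-1}$ differ by the factor $(1-u^2)^{K+1}$; one must therefore either route through the quadratic transformation plus a Beta-type integral as above, or integrate by parts $K+1$ times using (\ref{JacobiDifferen}) to lower the weight exponent down to $d/2-1$. Either way, obtaining the sharp exponent $J_{2m}\asymp m^{K-d/2}$, so that the product with the decaying coefficients lands at the summable order $m^{\,s-d-1}$, is the crux of the argument.
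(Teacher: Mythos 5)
Your first half coincides with the paper's own reduction (\ref{s_eq1})--(\ref{ineq_s_t1}): the $t$-design property replaces the double sum by $N^{2}\int_{\mathbb{S}^{d}}h_{t}(\langle\mathbf{x},\mathbf{y}\rangle)\,d\sigma_{d}(\mathbf{x})$, each cap $S(\pm\mathbf{x}_{j};\alpha_{N})$ holds at most one point, and $h_{t}(1)\ll t^{s}$ follows from the positive coefficients and $|\mathcal{P}_{n}^{(\alpha,\alpha)}(x)|\le\mathcal{P}_{n}^{(\alpha,\alpha)}(1)$, exactly as in (\ref{ineq1})--(\ref{abs_valS_t}). The second half is genuinely different. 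The paper never integrates $r_{t}$ over all of $[-1,1]$: it uses the identity $h_{t}=2^{-s/2}(1-\cdot)^{-s/2}-r_{t}$ only on $\mathbb{S}^{d}\setminus S(\pm\mathbf{y};\alpha_{N})$ and then bounds the three error terms in (\ref{W_def}) separately -- the Riesz kernel over the caps (\ref{ineq3})--(\ref{ineq4}), $h_{t}$ over the caps (\ref{ineq2}), and $r_{t}$ over the complement (\ref{ineq5})--(\ref{ineq8}), the last by recycling the pointwise bound (\ref{r_ineq1}) from Lemma~\ref{lem1}. You instead express $V_{d}(s)-I_{t}$ as the full weighted integral of $r_{t}$ and compute it term by term via parity, the quadratic transformation, and the closed-form moment $\int_{-1}^{1}(1-v)^{\frac d2-1}(1+v)^{-\frac12}\mathcal{P}_{m}^{(\frac d2+K,-\frac12)}(v)\,dv$. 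I checked your exponents: the moment formula gives $J_{2m}\asymp m^{K-\frac d2}$, the coefficients of (\ref{r_tDefinition}) are $\asymp m^{s-\frac d2-K-1}$, the products are $\asymp m^{s-d-1}$, summable for $s<d$, so the tail is $\asymp t^{s-d}$ and $N^{2}t^{s-d}\asymp Nt^{s}$ -- all correct. The paper's route needs nothing beyond the estimates already built for Lemma~\ref{lem1}, but must juggle singular integrals over shrinking caps; your route avoids the cap-splitting of the integral entirely and even pins down the exact order (and sign) of $V_{d}(s)-I_{t}$ rather than only an upper bound, at the price of two extra classical identities.

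One step needs repair: the interchange of sum and integral. Absolute convergence of $\sum_{n>t}c_{n}J_{n}$, or integrability of $r_{t}(u)(1-u^{2})^{\frac d2-1}$, does not by itself justify $\int\sum=\sum\int$, since the $J_{n}$ are \emph{signed} integrals and the series for $r_{t}$ does not converge uniformly up to $u=\pm1$. What is needed is Tonelli's criterion $\sum_{n>t}|c_{n}|\int_{-1}^{1}\big|\mathcal{P}_{n}^{(\alpha,\alpha)}(u)\big|(1-u^{2})^{\frac d2-1}\,du<\infty$, with $\alpha=\frac d2+K$. This does hold: splitting at $\sin\theta\asymp n^{-1}$ and using (\ref{JacobiIneq}) away from the endpoints together with $|\mathcal{P}_{n}^{(\alpha,\alpha)}|\le\mathcal{P}_{n}^{(\alpha,\alpha)}(1)\asymp n^{\alpha}$ near them gives $\int_{-1}^{1}|\mathcal{P}_{n}^{(\alpha,\alpha)}(u)|(1-u^{2})^{\frac d2-1}\,du\ll n^{K-\frac d2}$, the same order as $J_{n}$, so the computation you already performed for $\sum|c_{n}|J_{n}$ closes this gap verbatim. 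With that justification inserted, your argument is complete and yields (\ref{Lemma2}).
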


Formulas (\ref{1split})-(\ref{expansionSubst}), (\ref{Lemma1}) and (\ref{Lemma2}) yield (\ref{theorem1}).
Theorem \ref{theoremRisz} is proved.
\end{proof}

\begin{proof}[Proof of Theorem  \ref{theorem_det}]
Integrating $K_{d}$ with respect to the probability measure $d\sigma_{1}^{*}(\mathbf{x}_{1})...d\sigma_{N}^{*}(\mathbf{x}_{N})$, we obtain
\begin{multline}\label{integr}
\frac{1}{N^{2}}\int\limits_{A_{1}}...\int\limits_{A_{N}}\sum\limits_{i,j=1}^{N}K_{d}(\langle\textbf{x}_{i},\mathbf{x}_{j}\rangle)d\sigma_{1}^{*}(\mathbf{x}_{1})...d\sigma_{N}^{*}(\mathbf{x}_{N})\\
=\frac{1}{N}K_{d}(1)+\frac{1}{N^{2}}\sum\limits_{i,j=1, \atop i\neq j}^{N}\int\limits_{A_{i}}\int\limits_{A_{j}} K_{d}(\langle\textbf{x},\mathbf{y}\rangle)d\sigma_{i}^{*}(\mathbf{x})d\sigma_{j}^{*}(\mathbf{y}) \\
=\frac{1}{N}K_{d}(1)+\int\limits_{\mathbb{S}^{d}}\int\limits_{\mathbb{S}^{d}}K_{d}(\langle\textbf{x},\mathbf{y}\rangle)d\sigma(\mathbf{x})d\sigma(\mathbf{y})-
\frac{1}{N^{2}}\sum\limits_{i}^{N}\int\limits_{A_{i}}\int\limits_{A_{i}} K_{d}(\langle\textbf{x},\mathbf{y}\rangle)d\sigma_{i}^{*}(\mathbf{x})d\sigma_{i}^{*}(\mathbf{y}) \\
=\frac{1}{N}K_{d}(1)+a_{0}-
\frac{1}{N^{2}}\sum\limits_{i=1}^{N}\int\limits_{A_{i}}\int\limits_{A_{i}} K_{d}(\langle\textbf{x},\mathbf{y}\rangle)d\sigma_{i}^{*}(\mathbf{x})d\sigma_{i}^{*}(\mathbf{y}).
\end{multline}

Substituting (\ref{kernel}), we have 
\begin{multline}\label{dif1}
\frac{1}{N}K_{d}(1)-
\frac{1}{N^{2}}\sum\limits_{i=1}^{N}\int\limits_{A_{i}}\int\limits_{A_{i}} K_{d}(\langle\textbf{x},\mathbf{y}\rangle)d\sigma_{i}^{*}(\mathbf{x})d\sigma_{i}^{*}(\mathbf{y}) \\
=\frac{1}{N^{2}}\sum\limits_{i=1}^{N}\Big(\sum\limits_{n=0}^{\infty}a_{n}P_{n}^{(d)}(1)-
\sum\limits_{n=0}^{\infty}a_{n}P_{n}^{(d)}(\cos\theta_{i}) \Big) \\
=\frac{1}{N^{2}}\sum\limits_{i=1}^{N}\sum\limits_{n=0}^{[N^{\frac{1}{d}}]}a_{n}\big(P_{n}^{(d)}(1)-P_{n}^{(d)}(\cos\theta_{i}) \big)+
\frac{1}{N^{2}}\sum\limits_{i=1}^{N}\sum\limits_{n=[N^{\frac{1}{d}}]+1}^{\infty}a_{n}\big(P_{n}^{(d)}(1)-P_{n}^{(d)}(\cos\theta_{i}) \big),
\end{multline}
where $\cos\theta_{i}\in M_{i}$, $M_{i}:=\Big\{\langle\textbf{x},\mathbf{y}\rangle \ \Big| \ \textbf{x},\textbf{y}\in A_{i} \Big\}$.

The second term in right-hand side of (\ref{dif1}) can be bounded above by
\begin{align}\label{secondTerm}
\frac{1}{N^{2}}\sum\limits_{i=1}^{N}\sum\limits_{n=[N^{\frac{1}{d}}]+1}^{\infty}a_{n}\big(P_{n}^{(d)}(1)-P_{n}^{(d)}(\cos\theta_{i}) \big)\leq 
\frac{2}{N}\sum\limits_{n=[N^{\frac{1}{d}}]+1}^{\infty}a_{n}.
\end{align}

If $\mathrm{diam}(A_{i})\leq C N^{-\frac{1}{d}}$, then 
\begin{align*}
|\textbf{x}-\textbf{y}|\leq C N^{-\frac{1}{d}} \ \ \forall \textbf{x},\textbf{y}\in A_{i},
\end{align*}
and
\begin{align}\label{diam}
\cos\theta_{i}\geq 1- \frac{C^{2}}{2}N^{-\frac{2}{d}}.
\end{align}

Using the mean value theorem and relations (\ref{LegendreJacobi}), (\ref{JacobiDifferen}), and (\ref{JacobiMax}), we obtain that
\begin{multline}\label{dif2}
\frac{1}{N^{2}}\sum\limits_{i=1}^{N}\sum\limits_{n=1}^{[N^{\frac{1}{d}}]}a_{n}\big(P_{n}^{(d)}(1)-P_{n}^{(d)}(\cos\theta_{i}) \big)\\
=
\frac{1}{N^{2}}\sum\limits_{i=1}^{N}\sum\limits_{n=0}^{[N^{\frac{1}{d}}]}a_{n}\frac{n!}{(d/2)_{n}}\big(\mathcal{P}_{n}^{(\frac{d}{2}-1, \frac{d}{2}-1)}(1)-\mathcal{P}_{n}^{(\frac{d}{2}-1, \frac{d}{2}-1)}(\cos\theta_{i}) \big)\\
=
\frac{1}{N^{2}}\sum\limits_{i=1}^{N}\sum\limits_{n=0}^{[N^{\frac{1}{d}}]}a_{n}\frac{n!}{(d/2)_{n}}\big(1-\cos\theta_{i}\big)\frac{d}{dx}\mathcal{P}_{n-1}^{(\frac{d}{2}-1, \frac{d}{2}-1)}(\xi_{i}) \\
\leq \frac{1}{N}\sum\limits_{n=1}^{[N^{\frac{1}{d}}]}a_{n}\frac{n!}{(d/2)_{n}}\frac{d+n-1}{2}\big(1-\cos\theta_{i}\big)\mathcal{P}_{n-1}^{(\frac{d}{2}, \frac{d}{2})}(1)
 \ll \frac{1}{N^{1+\frac{2}{d}}}\sum\limits_{n=1}^{[N^{\frac{1}{d}}]}a_{n}n^{2},
\end{multline}
where $\xi_{i}\in[\cos\theta_{i}, 1]$.

Then Theorem~\ref{theorem_det} is proved.
\end{proof}

\begin{proof}[Proof of Theorem  \ref{theorem_sing}]
Integrating $\tilde{E}(K_{d}, X_{N})$ with respect to the probability measure $d\sigma_{1}^{*}(\mathbf{x}_{1})...d\sigma_{N}^{*}(\mathbf{x}_{N})$, we obtain
\begin{multline}\label{integrSingular}
\frac{1}{N^{2}}\int\limits_{A_{1}}...\int\limits_{A_{N}}\sum\limits_{i,j=1, \atop i\neq j}^{N}K_{d}(\langle\textbf{x}_{i},\mathbf{x}_{j}\rangle)d\sigma_{1}^{*}(\mathbf{x}_{1})...d\sigma_{N}^{*}(\mathbf{x}_{N})\\
=\frac{1}{N^{2}}\sum\limits_{i,j=1, \atop i\neq j}^{N}\int\limits_{A_{i}}\int\limits_{A_{j}} K_{d}(\langle\textbf{x},\mathbf{y}\rangle)d\sigma_{i}^{*}(\mathbf{x})d\sigma_{j}^{*}(\mathbf{y}) \\
=\int\limits_{\mathbb{S}^{d}}\int\limits_{\mathbb{S}^{d}}K_{d}(\langle\textbf{x},\mathbf{y}\rangle)d\sigma(\mathbf{x})d\sigma(\mathbf{y})-
\frac{1}{N^{2}}\sum\limits_{i=1}^{N}\int\limits_{A_{i}}\int\limits_{A_{i}} K_{d}(\langle\textbf{x},\mathbf{y}\rangle)d\sigma_{i}^{*}(\mathbf{x})d\sigma_{i}^{*}(\mathbf{y}) \\
=a_{0}-
\frac{1}{N^{2}}\sum\limits_{i=1}^{N}\int\limits_{A_{i}}\int\limits_{A_{i}} K_{d}(\langle\textbf{x},\mathbf{y}\rangle)d\sigma_{i}^{*}(\mathbf{x})d\sigma_{i}^{*}(\mathbf{y}).
\end{multline}

Taking into account,  that each $A_{i}$ contains the spherical cap $S(\mathbf{y}_{i}; c_{2}N^{-\frac{1}{d}})$ in its interior, we obtain
\begin{multline}\label{integrSingular1}
\frac{1}{N^{2}}\sum\limits_{i=1}^{N}\int\limits_{A_{i}}\int\limits_{A_{i}} K_{d}(\langle\textbf{x},\mathbf{y}\rangle)d\sigma_{i}^{*}(\mathbf{x})d\sigma_{i}^{*}(\mathbf{y}) \\
=
\frac{1}{N^{2}}\sum\limits_{i=1}^{N}\int\limits_{A_{i}}
\int\limits_{S(\mathbf{y}_{i}; c_{2}N^{-\frac{1}{d}})} K_{d}(\langle\textbf{x},\mathbf{y}\rangle)d\sigma_{i}^{*}(\mathbf{x})d\sigma_{i}^{*}(\mathbf{y})+\frac{1}{N}\mathcal{O}
\Big(\max\limits_{-1\leq x\leq1-\frac{2c_{2}^{2}}{\pi^{2}}}K_{d}(x)\Big).
\end{multline}

Using formula (\ref{a1}), we have that
\begin{multline}\label{integrSingular2}
\frac{1}{N^{2}}\sum\limits_{i=1}^{N}\int\limits_{A_{i}}
\int\limits_{S(\mathbf{y}_{i}; c_{2}N^{-\frac{1}{d}})} K_{d}(\langle\textbf{x},\mathbf{y}\rangle)d\sigma_{i}^{*}(\mathbf{x})d\sigma_{i}^{*}(\mathbf{y})\\
=\frac{1}{N}
\frac{\Gamma(\frac{d+1}{2})}{\sqrt{\pi}\Gamma(\frac{d}{2})}\int\limits_{\cos(c_{2}N^{-\frac{1}{d}})}^{1}K_{d}(x)(1-x^{2})^{\frac{d}{2}-1}dx.
\end{multline}

Formulas (\ref{integrSingular})-(\ref{integrSingular2}) imply (\ref{theoremSingular}). Theorem~\ref{theorem_sing} is proved.
\end{proof}

\section{Proof of Lemmas 1-2}
\label{proofLem}

\begin{proof}[Proof of Lemma  \ref{lem1}]

Applying relations (\ref{r_tDefinition}), (\ref{Pochhammer}), (\ref{gamma}) and (\ref{JacobiIneq}), we find that for $0<\theta<\pi$,
 \begin{multline}\label{r_ineq1}
|r_{t}(\cos\theta)|\ll
\sum\limits_{n=t+1}^{\infty}\frac{(n+\frac{d}{2}+K+\frac{1}{2})(\frac{s}{2})_{n}}{\Gamma(n+d+2K-\frac{s}{2}+2 )}\frac{(d+2K+1)_{n}}{(\frac{d}{2}+K+1)_{n}}\big|\mathcal{P}_{n}^{(\frac{d}{2}+K, \ \frac{d}{2}+K)}(\cos\theta)\Big| \\
\ll
\sum\limits_{n=t+1}^{\infty}n^{-\frac{d}{2}-K+s-1}\Big|\mathcal{P}_{n}^{(\frac{d}{2}+K, \ \frac{d}{2}+K)}(\cos\theta)\Big| \\
\ll
\sum\limits_{n=t+1}^{\infty}n^{-\frac{d}{2}-K+s-1}\Big(n^{-\frac{1}{2}}(\sin\theta)^{-\frac{d}{2}-K-\frac{1}{2}}+ n^{-\frac{3}{2}}(\sin\theta)^{-\frac{d}{2}-K-\frac{3}{2}}\Big) \\
\ll t^{-\frac{d}{2}-K+s-\frac{1}{2}}(\sin\theta)^{-\frac{d}{2}-K-\frac{1}{2}}+ t^{-\frac{d}{2}-K+s-\frac{3}{2}}(\sin\theta)^{-\frac{d}{2}-K-\frac{3}{2}}.
\end{multline}

We define $\theta_{ij}^{\pm}\in[0,\pi]$ by $\cos \theta_{ij}^{\pm}:=\langle\mathbf{x}_{i},\pm\mathbf{x}_{j}\rangle$. Then $\sin \theta_{ij}^{+}=\sin \theta_{ij}^{-}$.

This and formula (\ref{r_ineq1}) imply
\begin{multline}\label{r_ineq2}
E_{r_{t}}(X)\ll t^{-\frac{d}{2}-K+s-\frac{1}{2}}
\sum\limits_{j=1}^{N}
  {\mathop{\sum}\limits_{i=1,\atop \textbf{x}_{i}\in H^{\pm}_{i}\setminus S(\pm x_{j};\alpha_{N})}^{N}}(\sin\theta_{ij}^{\pm})^{-\frac{d}{2}-K-\frac{1}{2}} \\
  + 
  t^{-\frac{d}{2}-K+s-\frac{3}{2}}\sum\limits_{j=1}^{N}
  {\mathop{\sum}\limits_{i=1,\atop \textbf{x}_{i}\in H^{\pm}_{i}\setminus S(\pm x_{j};\alpha_{N})}^{N}}(\sin\theta_{ij}^{\pm})^{-\frac{d}{2}-K-\frac{3}{2}}.
\end{multline}

 From \cite[(3.30) and (3.33)]{Brauchart-Hesse2007:numerical_integration}, it
 follows that
 \begin{multline} \label{BrauchartHesse}
  \frac{1}{N^{2}}\sum\limits_{j=1}^{N}{\mathop{\sum}\limits^{N}_{
 i=1,\atop \mathbf{x}_{i}\in H_{j}^{\pm}\setminus
S(\pm\mathbf{x}_{j}; \frac{c}{n})}}
  (\sin\theta_{ij}^{\pm})^{-\frac{d}{2}+\frac{1}{2}-k-L}
\\
 \ll 1+n^{L+k-(d+1)/2}, \quad k=0,1,\ldots \quad \text{for }L>\frac{d+1}{2}.
 \end{multline}
Choosing $K>\frac{d+1}{2}$ and applying estimates (\ref{NandT}), (\ref{alphaEstim}) and (\ref{BrauchartHesse}) to each  term from the right part of (\ref{r_ineq2}), we have that
\begin{multline}\label{r_ineq3}
E_{r_{t}}(X)\ll t^{-\frac{d}{2}-K+s-\frac{1}{2}}
N^{2}(N^{\frac{1}{d}})^{K-\frac{d}{2}+\frac{1}{2}} 
  + 
  t^{-\frac{d}{2}-K+s-\frac{3}{2}}N^{2}(N^{\frac{1}{d}})^{K-\frac{d}{2}+\frac{3}{2}} \\
  \ll N^{1+\frac{s}{d}}.
\end{multline}
From (\ref{r_ineq3}) we get (\ref{Lemma1}). This completes the proof. 
\end{proof}

\begin{proof}[Proof of Lemma  \ref{lem2}]
The polynomial $h_{t}$ is a spherical polynomial of degree $t$ and $X_{N}$ is a spherical $t$-design. Thus, $h_{t}$ is integrating exactly by an equal weight integration rule with nodes from $X_{N}$, and 
\begin{multline}\label{s_eq1}
E_{h_{t}}(X)=\sum\limits_{j=1}^{N}
  {\mathop{\sum}\limits_{i=1,\atop \textbf{x}_{i}\in H^{\pm}_{i}\setminus S(\pm \mathbf{x}_{j};\alpha_{N})}^{N}}h_{t}(\langle\textbf{x}_{i},\mathbf{x}_{j}\rangle) \\
  =\sum\limits_{i,j=1}^{N}h_{t}(\langle\textbf{x}_{i},\mathbf{x}_{j}\rangle)-
  \sum\limits_{j=1}^{N}
  {\mathop{\sum}\limits_{i=1,\atop \textbf{x}_{i}\in S(- x_{j};\alpha_{N})}^{N}}h_{t}(\langle\textbf{x}_{i},\mathbf{x}_{j}\rangle)-Nh_{t}(1) \\
 =N^{2}\int\limits_{\mathbb{S}^{d}}h_{t}(\langle\textbf{x}_{i},\mathbf{x}\rangle)d\sigma_{d}(\textbf{x})-
  \sum\limits_{j=1}^{N}
  {\mathop{\sum}\limits_{i=1,\atop \mathbf{x}_{i}\in S(- x_{j};\alpha_{N})}^{N}}h_{t}(\langle\mathbf{x}_{i},\mathbf{x}_{j}\rangle)-Nh_{t}(1).
\end{multline}

We observe, that
\begin{align}\label{ineq1}
\Big| \sum\limits_{j=1}^{N}
  {\mathop{\sum}\limits_{i=1,\atop \mathbf{x}_{i}\in S(- x_{j};\alpha_{N})}^{N}}h_{t}(\langle\mathbf{x}_{i},\mathbf{x}_{j}\rangle)\Big|\leq N h_{t}(1). 
\end{align}

From relations  (\ref{Pochhammer}), (\ref{gamma}), (\ref{JacobiMax}) and (\ref{s_tDefinition})
\begin{multline}\label{abs_valS_t}
h_{t}(1) 
= 2^{d+2K-s+1}\pi^{-\frac{1}{2}}\Gamma\Big(\frac{d}{2}+K+\frac{1}{2}\Big)\Gamma\Big(\frac{d}{2}+K-\frac{s}{2}+1\Big) \\
\times\sum\limits_{n=0}^{t}\frac{(n+\frac{d}{2}+K+\frac{1}{2})(\frac{s}{2})_{n}}{\Gamma(n+d+2K-\frac{s}{2}+2 )}\frac{(d+2K+1)_{n}}{(\frac{d}{2}+K+1)_{n}}\mathcal{P}_{n}^{(\frac{d}{2}+K, \ \frac{d}{2}+K)}(1) \\
= 2^{d+2K-s+1}\pi^{-\frac{1}{2}}\Gamma\Big(\frac{d}{2}+K+\frac{1}{2}\Big)\Gamma\Big(\frac{d}{2}+K-\frac{s}{2}+1\Big) \\
\times\sum\limits_{n=0}^{t}\frac{(n+\frac{d}{2}+K+\frac{1}{2})(\frac{s}{2})_{n}}{\Gamma(n+d+2K-\frac{s}{2}+2 )}\frac{(d+2K+1)_{n}}{(\frac{d}{2}+K+1)_{n}}\frac{\Gamma(n+\frac{d}{2}+K+1)}{\Gamma(\frac{d}{2}+K+1)\Gamma(n+1)}\ll  t^{s}.
\end{multline}
Thus, relations (\ref{s_eq1})-\ref{abs_valS_t}) yield
\begin{align}\label{ineq_s_t1}
E_{h_{t}}(X)=N^{2}\int\limits_{\mathbb{S}^{d}}h_{t}(\langle\mathbf{x},\mathbf{y}\rangle)d\sigma_{d}(\textbf{x})+\mathcal{O}(Nt^{s}), \ \ \mathbf{y}\in \mathbb{S}^{d}.
\end{align}

The expansion (\ref{expansionGegenbauer1}) holds only inside the interval $(-1,1)$.
Thus, we  write the integral from (\ref{ineq_s_t1}) in the following way
\begin{multline}\label{s_eq2}
N^{2}\int\limits_{\mathbb{S}^{d}}h_{t}(\langle\mathbf{x},\mathbf{y}\rangle)d\sigma_{d}(\textbf{x}) \\
=
N^{2}\int\limits_{\mathbb{S}^{d}\setminus S(\pm\mathbf{y};\alpha_{N})}h_{t}(\langle\mathbf{x},\mathbf{y}\rangle)d\sigma_{d}(\textbf{x}) +
N^{2}\int\limits_{S(\pm\mathbf{y};\alpha_{N})}h_{t}(\langle\mathbf{x},\mathbf{y}\rangle)d\sigma_{d}(\textbf{x}) \\
=
N^{2}\int\limits_{\mathbb{S}^{d}\setminus S(\pm \mathbf{y};\alpha_{N})}\Big( 1- \langle\mathbf{x},\mathbf{y}\rangle)^{-\frac{s}{2}}-r_{t}(\langle\mathbf{x},\mathbf{y}\rangle) \Big) d\sigma_{d}(\textbf{x})
+
N^{2}\int\limits_{S(\pm \mathbf{y};\alpha_{N})}h_{t}(\langle\mathbf{x},\mathbf{y}\rangle)d\sigma_{d}(\textbf{x}) \\
= 
N^{2}\int\limits_{\mathbb{S}^{d}}\Big( 1- \langle\textbf{x},\mathbf{y}\rangle)^{-\frac{s}{2}} d\sigma_{d}(\textbf{x})+W_{t}(X_{N}), \ \ \mathbf{y}\in \mathbb{S}^{d},
\end{multline}
where we have used the fact, that the series $r_{t}(\langle\mathbf{x},\mathbf{y}\rangle)$ converges uniformly for all $\mathbf{x}\in \mathbb{S}^{d}\setminus S(\pm\mathbf{y};\alpha_{N})$, 
and 
\begin{multline}\label{W_def}
W_{t}(X_{N})=W_{t}(d,s,X_{N}):=
-N^{2}\int\limits_{S(\pm \mathbf{y};\alpha_{N})}(1- \langle\mathbf{x},\mathbf{y}\rangle)^{-\frac{s}{2}} d\sigma_{d}(\mathbf{x}) \\
-N^{2}\int\limits_{\mathbb{S}^{d}\setminus S(\pm \mathbf{y};\alpha_{N})}r_{t}(\langle\mathbf{x},\mathbf{y}\rangle)d\sigma_{d}(\mathbf{x})
+
N^{2}\int\limits_{S(\pm \mathbf{y};\alpha_{N})}h_{t}(\langle\mathbf{x},\mathbf{y}\rangle)d\sigma_{d}(\mathbf{x}). 
\end{multline}

Now let us show that 
\begin{align}\label{W_estim}
|W_{t}(X_{N})|\ll N^{1+\frac{s}{d}}.
\end{align}

For the  third term in (\ref{W_def}) the following estimate holds
\begin{multline}\label{ineq2}
\Big|N^{2}\int\limits_{S(\pm\mathbf{y};\alpha_{N})}h_{t}(\langle\mathbf{x},\mathbf{y}\rangle)d\sigma_{d}(\textbf{x})\Big|\leq N^{2}h_{t}(1)|S(\textbf{y};\alpha_{N})| \ll
N^{2}t^{s}|S(\textbf{y};\alpha_{N})| \\
\asymp N^{2}t^{s}(1-\cos\alpha_{N})^{\frac{d}{2}}\asymp N^{2}t^{s}(N^{-\frac{2}{d}})^{\frac{d}{2}}=Nt^{s},
\end{multline}
where we have used the formula for the normalized surface area of spherical cap (\ref{capArea}) and the estimates (\ref{abs_valS_t}) and (\ref{alphaEstim}).

Now we show, that
\begin{align}\label{firstTerm}
\Big|N^{2}\int\limits_{S(\pm \mathbf{y};\alpha_{N})}(1- \langle\mathbf{x},\mathbf{y}\rangle)^{-\frac{s}{2}} d\sigma_{d}(\textbf{x})\Big|\ll N^{1+\frac{s}{d}}.
\end{align}

Clearly,
\begin{align}\label{ineq3}
\Big|N^{2}\int\limits_{S(-\mathbf{y};\alpha_{N})}(1- \langle\mathbf{x},\mathbf{y}\rangle)^{-\frac{s}{2}} d\sigma_{d}(\mathbf{x})\Big|\ll N^{2}|S(\mathbf{x};\alpha_{N})|\ll N.
\end{align}

From (\ref{a1}) we have 
\begin{multline}\label{ineq4}
\Big|N^{2}\int\limits_{S(\mathbf{y};\alpha_{N})}(1- \langle\mathbf{x},\mathbf{y}\rangle)^{-\frac{s}{2}} d\sigma_{d}(\mathbf{x})\Big| \\
=N^{2}\frac{\Gamma(\frac{d+1}{2})}{\sqrt{\pi}\Gamma(\frac{d}{2})}\int\limits_{1-\frac{c_{1}^{2}}{8N^{\frac{2}{d}}}}^{1}(1-x)^{-\frac{s}{2}}(1-x^{2})^{\frac{d}{2}-1}dx 
\ll N^{1+\frac{s}{d}}.
\end{multline}
Combining (\ref{ineq3}) and (\ref{ineq4}), we obtain (\ref{firstTerm}). 

It remains to examine the  second term in (\ref{W_def}). Relation (\ref{a1}) and the estimate (\ref{r_ineq1}) allow us to write
\begin{multline}\label{ineq5}
\Big|N^{2}\int\limits_{\mathbb{S}^{d}\setminus S(\pm \mathbf{y};\alpha_{N})}r_{t}(\langle\mathbf{x},\mathbf{y}\rangle)d\sigma_{d}(\mathbf{x})
\Big| \ll
N^{2}\int\limits_{-1+\frac{c_{1}^{2}}{8N^{\frac{2}{d}}}}^{1-\frac{c_{1}^{2}}{8N^{\frac{2}{d}}}}|r_{t}(x)|(1-x^{2})^{\frac{d}{2}-1}dx \\
\ll
N^{2}\int\limits_{-1+\frac{c_{1}^{2}}{8N^{\frac{2}{d}}}}^{1-\frac{c_{1}^{2}}{8N^{\frac{2}{d}}}} t^{-\frac{d}{2}-K+s-\frac{1}{2}}(\sqrt{1-x^{2}})^{-\frac{d}{2}-K-\frac{1}{2}}(1-x^{2})^{\frac{d}{2}-1}dx \\
+N^{2}\int\limits_{-1+\frac{c_{1}^{2}}{8N^{\frac{2}{d}}}}^{1-\frac{c_{1}^{2}}{8N^{\frac{2}{d}}}}
t^{-\frac{d}{2}-K+s-\frac{3}{2}}(\sqrt{1-x^{2}})^{-\frac{d}{2}-K-\frac{3}{2}}(1-x^{2})^{\frac{d}{2}-1}dx.
\end{multline}
For the first term in right-hand side of (\ref{ineq5}) we obtain 
\begin{multline}\label{ineq6}
\int\limits_{-1+\frac{c_{1}^{2}}{8N^{\frac{2}{d}}}}^{1-\frac{c_{1}^{2}}{8N^{\frac{2}{d}}}}(\sqrt{1-x^{2}})^{-\frac{d}{2}-K-\frac{1}{2}}(1-x^{2})^{\frac{d}{2}-1}dx=
2\int\limits_{\alpha_{N}}^{\frac{\pi}{2}} (\sin y)^{\frac{d}{2}-K-\frac{3}{2}}dy \\
\ll
\int\limits_{\alpha_{N}}^{\frac{\pi}{2}}  y^{\frac{d}{2}-K-\frac{3}{2}}dy\ll
 (\alpha_{N})^{\frac{d}{2}-K-\frac{1}{2}} \ll 
  \Big(N^{-\frac{1}{d}}) \Big)^{\frac{d}{2}-K-\frac{1}{2}} 
  \ll t^{-\frac{d}{2}+K+\frac{1}{2}},
\end{multline}
where we have used relations (\ref{sinIneq}) and (\ref{NandT}) and fact that $K>\frac{d}{2}+1$.

In the same way
\begin{align}\label{ineq7}
\int\limits_{-1+\frac{c_{1}^{2}}{8N^{\frac{2}{d}}}}^{1-\frac{c_{1}^{2}}{8N^{\frac{2}{d}}}}(\sqrt{1-x^{2}})^{-\frac{d}{2}-K-\frac{3}{2}}(1-x^{2})^{\frac{d}{2}-1}dx\ll t^{-\frac{d}{2}+K+\frac{3}{2}}.
\end{align}

Thus, relations (\ref{ineq5})-(\ref{ineq7}) yield
\begin{align}\label{ineq8}
\Big|N^{2}\int\limits_{\mathbb{S}^{d}\setminus S(\pm \mathbf{y};\alpha_{N})}r_{t}(\langle\mathbf{x},\mathbf{y}\rangle)d\sigma_{d}(\mathbf{x})
\Big| \ll N^{1+\frac{s}{d}}.
\end{align}
Combining (\ref{W_def}), (\ref{ineq2}), (\ref{firstTerm}) and (\ref{ineq8}) we obtain desired estimate (\ref{W_estim}).

Formulas (\ref{main_term}), (\ref{ineq_s_t1}), (\ref{s_eq2}) and (\ref{W_estim}) imply (\ref{Lemma2}).
Lemma~\ref{lem2} is proved.
\end{proof}

\section*{References}

\providecommand{\bysame}{\leavevmode\hbox to3em{\hrulefill}\thinspace}
\providecommand{\MR}{\relax\ifhmode\unskip\space\fi MR }
% \MRhref is called by the amsart/book/proc definition of \MR.
\providecommand{\MRhref}[2]{%
  \href{http://www.ams.org/mathscinet-getitem?mr=#1}{#2}
}
\providecommand{\href}[2]{#2}

% \bibliography{BibGrabnerStepanyuk}

% \bibliographystyle{amsplain}

\end{document}